\documentclass[11pt]{article}
\usepackage[utf8]{inputenc}

\title{One-step ahead  sequential Super  Learning from  short times  series of
  many  slightly  dependent  data,  and   anticipating  the  cost  of  natural
  disasters}

\author{Geoffrey    Ecoto$^{1,2}$,    Aurélien   F.     Bibaut$^3$,    Antoine
  Chambaz$^2$\\~\\ 
  $^1$ Caisse Centrale de Réassurance\\
  $^2$ MAP5 (UMR CNRS 8145), Université de Paris\\
  $^3$ Netflix}

\date{\today}
\usepackage[dvipsnames]{xcolor}
\usepackage[colorlinks=true,allcolors=blue,hyperfootnotes=false]{hyperref}
\usepackage[margin=1in]{geometry}
\usepackage[mathscr]{euscript}
\usepackage{setspace}\doublespacing
\usepackage{amssymb,amsmath,amsthm,stmaryrd}
\usepackage{graphicx}
\newtheorem{theorem}{Theorem}
\newtheorem{corollary}[theorem]{Corollary}
\newtheorem{assumption}{Assumption}
\newtheorem{lemma}[theorem]{Lemma}

\usepackage{natbib}
\bibliographystyle{plainnat}
\allowdisplaybreaks
\newcommand{\1}{\textbf{1}}
\newcommand{\argmin}{\mathop{\arg\min}}
\newcommand{\bbR}{\mathbb{R}}
\newcommand{\calA}{\mathcal{A}}

\newcommand{\calF}{\mathcal{F}}
\newcommand{\calG}{\mathcal{G}}
\newcommand{\calI}{\mathcal{I}}
\newcommand{\calO}{\mathcal{O}}
\newcommand{\calX}{\mathcal{X}}
\newcommand{\calZ}{\mathcal{Z}}
\newcommand{\Exp}{\mathbb{E}}
\newcommand{\LL}{\mathscr{L}}
\newcommand{\Prob}{\mathbb{P}}
\newcommand{\RR}{\mathscr{R}}
\newcommand{\tZeta}{\tilde{\Zeta}}
\newcommand{\var}{\mathrm{var}}
\newcommand{\Var}{\mathbb{V}\mathrm{ar}}
\DeclareMathOperator{\Summ}{Summ}
\newcommand{\Zeta}{\mathscr{Z}}
\definecolor{ballblue}{rgb}{0.13, 0.67, 0.8}

\begin{document}

\maketitle

\begin{abstract}
  Suppose that  we observe  a short time  series where  each time-$t$-specific
  data-structure consists of  many slightly dependent data indexed  by $a$ and
  that we want to estimate a feature of the law of the experiment that depends
  neither on  $t$ nor  on $a$.   We develop  and study  an algorithm  to learn
  sequentially which base algorithm in a user-supplied collection best carries
  out the estimation  task in terms of excess risk  and oracular inequalities.
  The analysis, which uses dependency graph to model the amount of conditional
  independence  within each  $t$-specific data-structure  and a  concentration
  inequality by~\citet{Janson04},  leverages a  large ratio  of the  number of
  distinct $a$-s to the degree of the  dependency graph in the face of a small
  number of $t$-specific data-structures.   The so-called one-step ahead Super
  Learner  is applied  to the  motivating example  where the  challenge is  to
  anticipate the cost of natural disasters in France.
\end{abstract}

\section{Introduction}
\label{sec:intro}

\paragraph*{Caisse Centrale de Réassurance  and the cost of natural disasters in France.}

In  France,  Law  n\textsuperscript{o}82-600  of  July  13th  1982  imposes  a
compulsory extension of the guarantee for all property insurance contracts for
the coverage of natural catastrophes. This  law defines the legal framework of
the  natural  disasters  compensation  scheme, of  which  Caisse  Centrale  de
Réassurance  (CCR)  is  a  major  actor in  France.   With  the  French  State
guarantee,  CCR  provides its  cedents\footnote{A  cedent  is  a party  in  an
  insurance  contract  that  passes   the  financial  obligation  for  certain
  potential losses to the insurer. In  return for bearing a particular risk of
  loss,  the  cedent   pays  an  insurance  premium.}    operating  in  France
(\textit{i.e.},  the   insurance  companies  operating  in   France  that  CCR
reinsures) with unlimited coverage against  natural catastrophes.  In order to
better  anticipate  the risks,  CCR  has  developed  an expertise  in  natural
disasters  modeling.  The  so-called  ``cat models"~\citep{CatModels}  exploit
portfolios and  claims data collected  from CCR's  cedents to enable  a better
appreciation  of the  exposures\footnote{The state  of being  subject to  loss
  because of some  hazard or contingency.} of  CCR, of its cedents  and of the
French  State.   Our  study  proposes  a new  method  to  better  predict  the
aforementioned exposures. Termed ``one-step ahead sequential Super Learning'',
rooted  in statistical  theory, the  method allows  to learn  from short  time
series of many slightly dependent data.

\paragraph*{Statistical challenges.}

Developing  such  a method  presents  several  technical challenges.   From  a
theoretical   point  of   view,  we   have  to   deal  with   a  time   series
$(\bar{O}_{t})_{t\geq  1}$  whose  time-$t$-specific  component  $\bar{O}_{t}$
consists of  a large  collection $(O_{\alpha,t})_{\alpha  \in \calA}$  of data
that are dependent but such that there is a large amount of independence among
them.  The time series  is observed only at a limited number  of time steps, a
drawback   that   could   be   mitigated   by   the   large   cardinality   of
$\calA$. Furthermore, for reasons that we  will present later on, we favor the
development of  a learning  algorithm that  works in  an online  fashion.  The
learning algorithm should build upon a library of competing algorithms, either
to select  the one  that performs  best or  to combine  the algorithms  into a
single  meta-algorithm   that  performs  almost   as  well  as   all  possible
combinations  thereof (this  is known  as stacking,  or aggregating,  or Super
Learning in the literature). Of course, assessing the said performances is not
easy,  notably  because  it  requires some  form  of  online  cross-validation
procedure.  From the  applied point of view, assembling the  learning data set
is  difficult because  the data  come from  many sources  and take  on various
shapes.  Moreover, some of the data are only partially available. Details will
be given later on.


\paragraph{Organization of the article.}

Section~\ref{sec:theory} presents the theoretical  development and analysis of
the one-step ahead sequential Super  Learner.  Readers who are more interested
in    the    application    than    in    the    theory    could    jump    to
Section~\ref{subsec:summary}  for  a summary.   Section~\ref{sec:anticipating}
presents the  complete application.   The main objective  is exposed  in finer
detail; the actual implementation of  the algorithm is described; the obtained
results are reported and  commented upon.  Section~\ref{sec:discussion} closes
the article on a discussion. Further details are given in the appendix.

\section{A new result for the one-step ahead sequential Super Learner}
\label{sec:theory}

Let $(\bar{O}_{t})_{t\geq  1}$ be a time-$t$-ordered  sequence of observations
where    each    $\bar{O}_{t}$    is    in   fact    a    finite    collection
$(O_{\alpha, t})_{\alpha  \in \calA}$  of $(\alpha,t)$-specific elements  of a
measured space $\calO$.  We are  especially interested in situations where the
variables  $(O_{\alpha,t})_{\alpha  \in  \calA}$ are  conditionally  dependent
given                            the                            $\sigma$-field
$F_{t-1}  := \sigma(O_{\alpha,\tau}  : \alpha  \in \calA,  1 \leq  \tau <  t)$
generated  by past  observations (by  convention, $F_{0}  := \emptyset$),  but
there is a large amount of conditional independence between them.

We rely  on conditional dependency graphs  to model the amount  of conditional
independence.\footnote{\citet{Janson04}   exploits   the   finer   notion   of
  fractional chromatic numbers.}

\begin{assumption}
  \label{assum:A0}
  There  exists  a  graph  $\calG$  with  vertex  set  $\calA$  such  that  if
  $\alpha  \in  \calA$  is  not  connected  by  any  edge  to  any  vertex  in
  $\calA' \subset \calA$, then  $O_{\alpha,t}$ is conditionally independent of
  $(O_{\alpha',t})_{\alpha'  \in \calA'}$  given  $F_{t-1}$  and (possibly)  a
  known,  fixed summary  measure  $\bar{Z}_{t}:=  \Summ(\bar{O}_{t})$ of  each
  observation  $\bar{O}_{t}$.\footnote{This notion  of conditional  dependency
    graph    is     weaker    than     the    one    that     requires    that
    $(O_{\alpha,t})_{\alpha           \in            \calA_{1}}$           and
    $(O_{\alpha,t})_{\alpha \in \calA_{2}}$ be conditionally independent given
    $F_{t-1}$ and  $\bar{Z}_{t}$ whenever $\calA_{1}, \calA_{2}$  are disjoint
    subsets of $\calA$ with no edge between them.}
\end{assumption}
For   every  $t\geq   1$   the  summary   measure   $\bar{Z}_{t}$  writes   as
$\bar{Z}_{t} :=  (Z_{\alpha,t})_{\alpha \in \calA} \in  \calZ^{\calA}$.  It is
said \textit{fixed} because it is  derived from $\bar{O}_{t}$ by evaluating at
$\bar{O}_{t}$  the  fixed (in  $t\geq  1$  and  $\alpha \in  \calA$)  function
$\Summ$. The  adverb \textit{possibly}  hints at the  case where  $\Summ$ maps
every $\bar{O}_{t}$ to an uninformative, empty summary.

We  let   $\deg(\calG)$  denote   1  plus  the   maximum  degree   of  $\calG$
(\textit{i.e.}, 1  plus the  largest number  of edges that  are incident  to a
vertex  in  $\calG$).  The  smaller  is  $\deg(\calG)$, the  more  conditional
independence we can rely on.

Our  main objective  is  to estimate  a feature  $\theta^{\star}$  of the  law
$\Prob$  of  $(\bar{O}_{t})_{t \geq  1}$,  an  element  of a  parameter  space
$\Theta$ that is  known to minimize over  $\Theta$ the risk induced  by a loss
$\ell$  and $\Prob$.   We consider  the specific  situation where  the feature
$\theta^{\star}$ can also be defined as  the shared minimizer over $\Theta$ of
all the risks induced  by a loss $\ell$ and all  the conditional marginal laws
of   $O_{\alpha,t}$    given   $Z_{\alpha,t}$    (``all''   refers    to   all
$\alpha \in \calA$ and $t \geq 1$).

For instance, we  can address a situation where,  firstly, each $O_{\alpha,t}$
decomposes   as   $O_{\alpha,t}    :=   (X_{\alpha,t},   Y_{\alpha,t})$   with
$X_{\alpha,t} \in \calX$ a  collection of $(\alpha,t)$-specific covariates and
$Y_{\alpha,t} \in [-1,1]$ a corresponding outcome of interest; secondly, under
$\Prob$, the exists a (fixed) graph $\calG$ with vertex set $\calA$ such that,
for all $t\geq 1$,  if $\alpha \in \calA$ is not connected by  any edge to any
vertex  in  $\calA'  \subset  \calA$,  then  $O_{\alpha,t}$  is  conditionally
independent  of   $(O_{\alpha',t})_{\alpha'  \in  \calA'}$   given  $F_{t-1}$;
thirdly,       there      exists       under      $\Prob$       (a      fixed)
$\theta^{\star}:\calX          \to          [-1,1]$         such          that
$\Exp(Y_{\alpha,t}|X_{\alpha,t} =  x, F_{t-1})  = \theta^{\star} (x)$  for all
$x \in  \calX$.  In that  situation, the loss  $\ell$ can be  the least-square
loss  function  that  maps  any  $\theta:\calX \to  [-1,1]$  to  the  function
$(x,y) \mapsto (y - \theta(x))^{2}$.   Note that here, every $Z_{\alpha,t}$ is
empty.

Generally, we make the following assumption.
\begin{assumption}
  \label{assum:A1}
  There exists a  loss function $\ell : \Theta \to  \bbR^{\calO \times \calZ}$
  such that the  feature of interest $\theta^{\star}$ minimizes  all the risks
  $\theta    \mapsto     \Exp[\ell(\theta)(O_{\alpha,t},    Z_{\alpha,t})    |
  Z_{\alpha,t},   F_{t-1}]$   over   $\Theta$,  ``all''   referring   to   all
  $\alpha \in \calA$  and $t \geq 1$.  Moreover, for  every $\theta\in \Theta$
  and sequence  $(\theta_{t})_{t \geq 1}$  of elements of $\Theta$  adapted to
  $(F_{t})_{t  \geq  1}$  (\textit{i.e.},   such  that  each  $\theta_{t}$  is
  $F_{t}$-measurable),  for   all  $t   \geq  2$  and   non-negative  integers
  $\varepsilon_{1},            \varepsilon_{2}$           such            that
  $\varepsilon_{1}+ \varepsilon_{2} = 2$,
  \begin{multline*}
    \Exp\left[\sum_{\alpha   \in    \calA}   (\ell(\theta_{t-1})(O_{\alpha,t},
      Z_{\alpha,t}))^{\varepsilon_{1}}    \times   (\ell(\theta)(O_{\alpha,t},
      Z_{\alpha,t}))^{\varepsilon_{2}}\middle|
      \bar{Z}_{t}, F_{t-1}\right] \\
    =  \sum_{\alpha   \in  \calA}  \Exp\left[(\ell(\theta_{t-1})(O_{\alpha,t},
      Z_{\alpha,t}))^{\varepsilon_{1}}    \times   (\ell(\theta)(O_{\alpha,t},
      Z_{\alpha,t}))^{\varepsilon_{2}} \middle| Z_{\alpha,t}, F_{t-1}\right].
  \end{multline*}
\end{assumption}
Assumption \textbf{A\ref{assum:A1}}  guarantees some  form of  stationarity in
$\Prob$ pertaining to its feature  of interest $\theta^{\star}$.  Thanks to it
there    is    hope    that     we    can    learn    $\theta^{\star}$    from
$\bar{O}_{1},  \ldots, \bar{O}_{t}$  even with  $t$ small  if the  cardinality
$|\calA|$ of $\calA$ is large (in  fact, if the ratio $|\calA|/\deg(\calG)$ is
large).

Section~\ref{subsec:SL} presents the one-step  ahead sequential Super Learner,
a  collection  of   assumptions  on  the  law  $\Prob$  of   the  time  series
$(\bar{O}_{t})_{t\geq  1}$  and  on  its  feature  $\theta^{\star}$,  and  our
theoretical  analysis  of  the   one-step  ahead  sequential  Super  Learner's
performance under these  assumptions.  Section~\ref{subsec:summary} summarizes
the  content of  Section~\ref{subsec:SL} and  Section~\ref{subsec:SL:comments}
gathers  comments on  Section~\ref{subsec:SL}.   The proofs  are presented  in
Appendix~\ref{app:strong:convexity} and \ref{app:proofs}.

\subsection{The  one-step  ahead sequential  Super  Learner  and its  oracular
  performances}
\label{subsec:SL}

\paragraph*{The one-step ahead sequential Super Learner.}

Let $\widehat{\theta}_{1}, \ldots, \widehat{\theta}_{J}$  be $J$ algorithms to
learn  $\theta^{\star}$ from  $(\bar{O}_{t})_{t\geq 1}$.   In words,  for each
$j \in  \llbracket J\rrbracket:=\{1,  \ldots, J\}$, $\widehat{\theta}_j$  is a
procedure that, for every $t \geq 1$, maps $\bar{O}_{1}, \ldots, \bar{O}_t$ to
an  element   of  a  $j$-specific   subset  $\Theta_j$  of   $\Theta$,  namely
$\theta_{j,t}  \in  \Theta_{j}$ (by  convention,  $\theta_{j,0}$  is a  fixed,
pre-specified element  of $\Theta_{j}$).  The one-step  ahead sequential Super
Learner that  we are about  to introduce is  a meta-algorithm that  learns, as
data accrue, which algorithm in the aforementioned collection performs best.

Strictly speaking,  the one-step  ahead sequential Super  Learner really  is an
online algorithm if each of the $J$ algorithms is online, that is, if for each
$j\in\llbracket  J\rrbracket$  and $t\geq  1$,  the  making of  $\theta_{j,t}$
consists  in  an  update  of  $\theta_{j,t-1}$ based  on  newly  accrued  data
$\bar{O}_{t}$. If  that is not  the case, then the  Super Learner is  merely a
sequential algorithm, updated at every time step $t$.

The  measure of  performance takes  the form  of an  average cumulative  risk
 conditioned    on    the    observed    sequence    $(\bar{Z}_{t})_{t    \geq
   1}$.          For          every         $j          \in         \llbracket
 J\rrbracket$, the  risk (for  short) of  $\widehat{\theta}_{j}$ till  time $t
 \geq 1$ is defined as
\begin{align}
  \label{eq:risk}
  \widetilde{R}_{j,t}
  &               :=               \frac{1}{t}               \sum_{\tau=1}^{t}
    \Exp\left[\bar{\ell}(\theta_{j,\tau-1})(\bar{O}_{\tau},
    \bar{Z}_{\tau})    \middle|   \bar{Z}_{\tau},    F_{\tau-1}\right]   \quad
    \text{where} \\
  \label{eq:bar:ell}
  \bar{\ell}(\theta)(\bar{O}_{\tau}, \bar{Z}_{\tau})
  &       :=       \frac{1}{|\calA|}       \sum_{\alpha       \in       \calA}
    \ell(\theta)(O_{\alpha,\tau},  Z_{\alpha,\tau})  \quad   \text{for  all  }
    \theta \in \Theta, \tau \geq 1.
\end{align}
The empirical counterpart of \eqref{eq:risk} is
\begin{equation}
  \label{eq:erisk}
  \widehat{R}_{j,t}
  := \frac{1}{t} \sum_{\tau=1}^{t} \bar{\ell}(\theta_{j,\tau-1})(\bar{O}_\tau,
  \bar{Z}_{\tau})
  =   \frac{1}{t|\calA|}   \sum_{\tau=1}^{t}   \sum_{\alpha   \in   \calA}
  \ell(\theta_{j, \tau-1})(O_{\alpha,\tau}, Z_{\alpha,\tau}). 
\end{equation}
At each time $t \geq 1$, the collection of $(j,t)$-specific empirical risks is
minimized at index $\widehat{j}_{t}$:
\begin{equation}
  \label{eq:oSL}
  \widehat{j}_{t} \in \argmin_{j \in \llbracket J\rrbracket} \widehat{R}_{j,t}
\end{equation}
(the unlikely  ties are  broken arbitrarily).   The one-step  ahead sequential
Super Learner  is the meta-algorithm  that learns $\theta^{\star}$  by mapping
$\bar{O}_{1},  \ldots, \bar{O}_{t}$  to  $\theta_{\widehat{j}_t,t}$ for  every
$t \geq 1$.

To assess  how well the one-step  ahead sequential Super Learner  performs, we
compare   its  risk   to  that   of   the  oracular   algorithm  that   learns
$\theta^{\star}$   by   mapping    $\bar{O}_{1},   \ldots,   \bar{O}_{t}$   to
$\theta_{\widetilde{j}_t,t}$ at each time $t \geq 1$, where
\begin{equation}
  \label{eq:oracle}
  \widetilde{j}_t \in \argmin_{j\in \llbracket J\rrbracket} \widetilde{R}_{j,t}
\end{equation}
(again, the unlikely ties are broken arbitrarily). This is discussed next.

\paragraph*{Comparing  the  one-step ahead  sequential  Super  Learner to  its
  oracular counterpart.}

So       far       we       have        defined       the       risks       of
$\widehat{\theta}_{1}, \ldots, \widehat{\theta}_{J}$, see \eqref{eq:risk}.  By
analogy,  for every  $\theta \in  \Theta$  and $t  \geq  1$, let  the risk  of
$\theta$ at time $t$ be
\begin{equation*}
  \widetilde{R}_{t}(\theta) := \frac{1}{t}               \sum_{\tau=1}^{t}
  \Exp\left[\bar{\ell}(\theta)(\bar{O}_{\tau},      \bar{Z}_{\tau})      \middle|
    \bar{Z}_{\tau}, F_{\tau-1}\right]. 
\end{equation*}
The risk $\widetilde{R}_{t}(\theta)$ can be  interpreted as the risk till time
$t\geq    1$     of    a     dummy    algorithm    that     constantly    maps
$\bar{O}_{1}, \ldots,  \bar{O}_{t}$ to $\theta$  (the algorithm is  said dummy
because it does not learn).  Let $\theta^{\circ} \in \Theta$ be such that
\begin{equation*}
  \widetilde{R}_{t}(\theta^{\circ}) \leq \min_{j \in \llbracket J\rrbracket}  \min_{\theta \in
    \Theta_{j}} \widetilde{R}_{t}(\theta).
\end{equation*}
Under   \textbf{A\ref{assum:A1}},    $\theta^{\circ}$   could   be    set   to
$\theta^{\star}$, but  other choices might  be made on  a case by  case basis.
Our main  results compare the  excess risks  of the one-step  ahead sequential
Super Learner and of the oracle, that is, they compare
\begin{equation*}
  \widetilde{R}_{\widehat{j}_{t},t}  - \widetilde{R}_{t}(\theta^{\circ})  \quad \text{to}
  \quad \widetilde{R}_{\widetilde{j}_{t},t} - \widetilde{R}_{t}(\theta^{\circ}).  
\end{equation*}
They rely on the following assumptions.

For          every          $\theta         \in          \Theta$,          let
$\Delta^{\circ}\ell(\theta) := \ell(\theta) - \ell(\theta^{\circ})$.

\begin{assumption}
  \label{assum:A2}
  There        exists        $b_{1}        >        0$        such        that
  $\sup_{\theta   \in   \Theta}   \|\Delta^{\circ}\ell(\theta)\|_\infty   \leq
  b_{1}$.  Moreover there  exists $b_{2}  \in ]0,  2b_{1}]$ such  that, almost
  surely, for all $\alpha \in \calA$, $t \geq 1$ and $\theta \in \Theta$,
  \begin{equation*}
    \left|\Delta^{\circ}\ell(\theta) (O_{\alpha,t},  Z_{\alpha,t}) -  \Exp \left[
        \Delta^{\circ}\ell(\theta)(O_{\alpha,t},     Z_{\alpha,t})    \middle|
        Z_{\alpha,t}, F_{t-1} \right]\right| \leq b_{2}. 
  \end{equation*}
\end{assumption}

\begin{assumption}
  \label{assum:A3}
  There exist $\beta \in ]0,1]$ and $\gamma > 0$ such that, almost surely, for
  all $\alpha \in \calA$, $t \geq 1$ and $\theta \in \Theta$,
  \begin{equation*}
    \Exp           \left[           \Big(\Delta^{\circ}\ell(\theta)(O_{\alpha,t},
      Z_{\alpha,t})\Big)^{2}  \middle|Z_{\alpha,t},  F_{t-1}  \right]  \leq  \gamma
    \Big(\Exp   \left[   \Delta^{\circ}\ell(\theta)(O_{\alpha,t},   Z_{\alpha,t})
      \middle| Z_{\alpha,t}, F_{t-1} \right]\Big)^{\beta}.
\end{equation*}
\end{assumption}

\begin{assumption}
  \label{assum:A4}
  There   exists   $v_{1}  >   0$   such   that,   almost  surely,   for   all
  $\alpha \in \calA$, $t\geq 1$ and $\theta \in \Theta$,
  \begin{equation*}
    \Var\left[\Delta^{\circ}\ell(\theta)(O_{\alpha,t},
      Z_{\alpha,t}) \middle| Z_{\alpha,t}, F_{t-1}\right] \leq v_{1}.
  \end{equation*}
\end{assumption}

Assumption  \textbf{A\ref{assum:A3}}  is  a so-called  ``variance  bound'',  a
well-known concept  in statistical  learning theory~\citep{BBM05,K06,BJMcA06}.
Under \textbf{A\ref{assum:A2}}, the radius of  the loss class is bounded. Note
that if  \textbf{A\ref{assum:A2}} is met, then  so is \textbf{A\ref{assum:A4}}
necessarily. We can now state our main results.

\begin{theorem}[High probability oracular inequality]
  \label{thm:main:one}
  Suppose     that     \textbf{A\ref{assum:A0}},     \textbf{A\ref{assum:A1}},
  \textbf{A\ref{assum:A2}},            \textbf{A\ref{assum:A3}}            and
  \textbf{A\ref{assum:A4}} are met. Define
  \begin{equation}
    \label{eq:def:v2}
    v_{2}                                                                   :=
    \frac{3\pi}{2}\left[\left(\frac{15b_{2}}{|\calA|/\deg(\calG)}\right)^{2} +
      \frac{64v_{1}}{|\calA|/\deg(\calG)}\right].  
  \end{equation}
  Fix arbitrarily two integers $N,N' \geq 2$ and a real number $a>0$, then set
  $\underline{x}(a,N):=       a        [2^{-N}       v_{2}/\gamma]^{1/\beta}$,
  $\underline{x}'(a,N'):=  a  b_{1}   2^{-N'}$.   For  all  $t   \geq  1$  and
  $x \geq \underline{x}(a,N)$, it holds that
  \begin{multline}
    \label{eq:thm:main:one:a}
    \Prob                \left[\widetilde{R}_{\widehat{j}_t,t}               -
      \widetilde{R}_{t}(\theta^{\circ})               \geq              (1+2a)
      \left(\widetilde{R}_{\widetilde{j}_t,t} -
        \widetilde{R}_{t}(\theta^{\circ})\right) + x\right] \\
    \leq 2 JN  \left[\exp \left(- \frac{t x^{2-\beta}}{C_1(a)}  \right) + \exp
      \left( -\frac{t x}{C_2(a)} \right)\right],
  \end{multline}
  where   $C_1(a)   :=   2^{5-\beta}   (1    +   a)^2   \gamma   /   a^\beta$,
  $C_2(a)  :=   8  (1+a)  b_{2}/3$.   Moreover,   for  all  $t  \geq   1$  and
  $x \geq \underline{x}'(a,N')$, it also holds that
  \begin{multline}
    \label{eq:thm:main:one:b}
    \Prob                \left[\widetilde{R}_{\widehat{j}_t,t}               -
      \widetilde{R}_{t}(\theta^{\circ})               \geq              (1+2a)
      \left(\widetilde{R}_{\widetilde{j}_t,t} -
        \widetilde{R}_{t}(\theta^{\circ})\right) + x\right] \\
    \leq     2e^{2}    J     N'     \Bigg[    \exp     \left(-\frac{[|\calA|/(
        t^{\beta}\deg(\calG))]   x^{2-\beta}}{C_{1}'(a)}    \right)   +   \exp
    \left(-\frac{[|\calA|/\deg(\calG)]x}{C_{2}'(a)} \right)\Bigg],
  \end{multline}
  where  $C_{1}'(a)  :=  2^{6+2\beta}e^{2}  (1+a)^{2}  \gamma/a^{\beta}$,  and
  $C_{2}'(a) := 60e(1+a)b_{2}$.
  
\end{theorem}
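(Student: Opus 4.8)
The plan is to reduce the oracular inequality to a uniform (over $j\in\llbracket J\rrbracket$) deviation bound between the empirical risks $\widehat{R}_{j,t}$ and the conditional risks $\widetilde{R}_{j,t}$, and then to control those deviations by a concentration inequality for sums over a conditional dependency graph. The standard Super Learning argument runs as follows. On the event that $\widehat{R}_{j,t}$ is close to $\widetilde{R}_{j,t}$ for every $j$, the index $\widehat{j}_t$ minimizing the empirical risk has conditional risk not much larger than that of $\widetilde{j}_t$; combined with the variance bound \textbf{A\ref{assum:A3}}, which lets one convert an additive slack on the excess risk into a multiplicative factor $(1+2a)$ at the price of an additive term proportional to a power of the slack (this is the classical ``$\varepsilon$-net on the scale of the excess risk'' trick, hence the parameters $N,N'$ and the geometric grid $2^{-N}v_2/\gamma$), one obtains \eqref{eq:thm:main:one:a} and \eqref{eq:thm:main:one:b}. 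So the real work is a one-sided concentration statement of the form: for each fixed $j$ and each $t$, $\widehat{R}_{j,t}-\widetilde{R}_{j,t}$ (applied to $\Delta^\circ\ell$) is, with the stated probability, below a threshold governed by $x$, $t$, and $|\calA|/\deg(\calG)$.

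First I would write $t|\calA|(\widehat{R}_{j,t}-\widetilde{R}_{j,t})$, evaluated at $\Delta^\circ\ell(\theta_{j,\tau-1})$, as a sum over $\tau\in\llbracket t\rrbracket$ and $\alpha\in\calA$ of the centered variables $W_{\alpha,\tau}:=\Delta^\circ\ell(\theta_{j,\tau-1})(O_{\alpha,\tau},Z_{\alpha,\tau})-\Exp[\,\cdot\mid Z_{\alpha,\tau},F_{\tau-1}]$. By \textbf{A\ref{assum:A1}} (the product-moment factorization given $\bar Z_\tau,F_{\tau-1}$) together with \textbf{A\ref{assum:A0}}, for each fixed $\tau$ the family $(W_{\alpha,\tau})_{\alpha\in\calA}$ is, conditionally on $\bar Z_\tau$ and $F_{\tau-1}$, a collection of mean-zero variables whose covariance structure is carried by the dependency graph $\calG$; \textbf{A\ref{assum:A2}} bounds each $|W_{\alpha,\tau}|$ by $b_2$ and \textbf{A\ref{assum:A4}} bounds each conditional variance by $v_1$. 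This is exactly the setting of Janson's concentration inequality for sums of bounded random variables with a dependency graph of degree $\deg(\calG)$: it yields, conditionally on $(\bar Z_\tau,F_{\tau-1})$, a Bernstein-type bound for $\sum_{\alpha}W_{\alpha,\tau}$ with effective sample size $|\calA|/\deg(\calG)$ and effective variance proportional to $v_1|\calA|/\deg(\calG)$, which is what produces the quantity $v_2$ in \eqref{eq:def:v2}. Summing over $\tau$ — handling the martingale structure in $\tau$ either by a second application of a Bernstein/Freedman-type bound for the martingale differences $\sum_\alpha W_{\alpha,\tau}$, or by iterating the conditional bound and using that the per-$\tau$ MGF bound holds uniformly — gives the factor $t$ in the exponents and the two regimes $x^{2-\beta}$ (sub-Gaussian/variance term, after feeding in \textbf{A\ref{assum:A3}}) and $x$ (Bernstein/boundedness term). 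The two displays \eqref{eq:thm:main:one:a} and \eqref{eq:thm:main:one:b} correspond to two ways of accounting the dependency-graph cost — once through $v_2$ directly (giving constants $C_1(a),C_2(a)$ with no explicit $|\calA|/\deg(\calG)$ because it is absorbed into $v_2$ and then into the range $\underline x(a,N)$), and once keeping $|\calA|/(t^\beta\deg(\calG))$ and $|\calA|/\deg(\calG)$ explicit in the exponents (giving $C_1'(a),C_2'(a)$ and the extra $e^2$ from Janson's inequality). Finally I would union-bound over $j\in\llbracket J\rrbracket$ and over the $N$ (resp.\ $N'$) scales of the geometric grid, which contributes the prefactor $2JN$ (resp.\ $2e^2JN'$).

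The main obstacle, and the step I would spend the most care on, is the passage from the \emph{conditional} (given $\bar Z_\tau,F_{\tau-1}$) dependency-graph concentration to an \emph{unconditional} bound that also accumulates correctly over the $t$ time steps: one must check that Janson's inequality is applied legitimately at each $\tau$ to variables that are centered and graph-dependent given the right $\sigma$-field, that the resulting exponential moment bounds can be chained across $\tau$ (a tower-property / martingale argument, where \textbf{A\ref{assum:A1}} is essential to guarantee the conditional variance of $\sum_\alpha W_{\alpha,\tau}$ factorizes as a sum of $\alpha$-wise conditional variances), and that the variance-to-excess-risk conversion via \textbf{A\ref{assum:A3}} is done on the correct, graph-averaged scale — this is where the precise form of $v_2$, the thresholds $\underline x(a,N),\underline x'(a,N')$, and the somewhat unusual constants (the $3\pi/2$, the $15$ and $64$, the $60e$) all get pinned down, and getting the bookkeeping of these constants to match is the fiddly part. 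The reduction from the oracular inequality to the uniform deviation bound, by contrast, is routine and I would present it first so that the concentration estimate is clearly the crux.
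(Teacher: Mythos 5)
Your high-level plan is the right one: Step~1 of the paper is exactly the algebraic Super Learning decomposition that produces the $(1+2a)$ factor plus remainder terms $A_{j,t}(a),B_{j,t}(a)$; the union bound over $j$ and over the $N$ (resp.\ $N'$) strata contributes the prefactor; and the ``$\varepsilon$-net on the scale of excess risk'' (a stratification à la Cesa-Bianchi--Gentile) is exactly what the grids $2^{i+1-N}v_2$ and $2^{i+1-N'}b_1$ implement. You also correctly identify that the graph structure enters through a Bernstein-type concentration inequality derived from Janson. But two details of your account do not match the paper's argument and are not just bookkeeping.

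First, the two inequalities \eqref{eq:thm:main:one:a} and \eqref{eq:thm:main:one:b} are not ``two ways of accounting the dependency-graph cost'' within one concentration argument; they come from two \emph{disjoint} concentration arguments. For \eqref{eq:thm:main:one:a} the paper does not touch Janson inside the deviation bound at all: it applies the Fan--Grama--Liu martingale concentration inequality directly to the bounded martingale $(\widetilde{H}_{j,t}-\widehat{H}_{j,t})_{t\geq 1}$, stratifying on the level of $\widetilde{\var}_{j,t}$. Janson enters only once, through Theorem~\ref{thm:Bernstein}'s by-product $\widetilde{\var}_{j,t}\leq v_2$ a.s., which fixes the top level of the grid (hence $v_2$ inside $\underline{x}(a,N)$). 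The factor $t$ in the exponent is entirely due to the martingale inequality. For \eqref{eq:thm:main:one:b} the paper stratifies on the level of $\widetilde{H}_{j,t}$ instead, and applies Theorem~\ref{thm:Bernstein} (the Janson-based bound) on each stratum; here there is no martingale step.

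Second, the way the $\tau$-sum is handled in the Janson-based bound is not the ``chain per-$\tau$ MGF / Freedman'' route you sketch; if it were, you would expect a factor $t$ in the exponent of \eqref{eq:thm:main:one:b}, and there isn't one — the first exponent features $|\calA|/(t^{\beta}\deg(\calG))$, so more time steps actually \emph{hurt}. The paper instead derives, from Janson, a Rosenthal-type $p$-th moment bound for each per-$\tau$ centered average (Corollary~\ref{cor:Janson}), averages over $\tau$ by convexity of $u\mapsto |u|^p$, and then converts the moment bound to a tail bound by Markov and an optimization over $p$; this is where the $\exp(2-\cdot)$ — hence the $e^2$ prefactor — comes from (so the $e^2$ is from the Markov step, not from Janson's statement). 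The $t^{\beta}$ in the exponent's denominator arises from the stratification on $\widetilde{H}_{j,t}$: bounding $\widetilde{H}_{j,t}\leq B$ only controls $\max_\tau\var_{j,\tau}$ up to $\gamma(tB)^\beta$ (inequality \eqref{eq:A3:csq:bis}), and that is the variance level fed into Theorem~\ref{thm:Bernstein}. You flag this passage as ``the step I would spend the most care on,'' which is fair, but the proof mechanism you sketch for it (iterating conditional MGF bounds over $\tau$) is not what the paper does, and would likely run into exactly the self-normalization issues you anticipate.
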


We   derive   the   following   oracular  inequality   in   expectation   from
Theorem~\ref{thm:main:one}.

\begin{corollary}[Oracular inequality for the expected risk]
  \label{cor:main}
  Suppose     that     \textbf{A\ref{assum:A0}},     \textbf{A\ref{assum:A1}},
  \textbf{A\ref{assum:A2}},            \textbf{A\ref{assum:A3}}            and
  \textbf{A\ref{assum:A4}} are met.  For any $a \in]0,1]$, it holds that
  \begin{multline}
    \label{eq:cor:a}
    \Exp           \left[          \widetilde{R}_{\widehat{j}_t,t}           -
      \widetilde{R}_{t}(\theta^{\circ})        -        (1        +        2a)
      \left(\widetilde{R}_{\widetilde{j}_t, t} -
        \widetilde{R}_{t}(\theta^{\circ})\right) \right] \\
    \leq    3\left(\frac{C_{1}(a)}{t}   \log    (2JN)\right)^{1/(2-\beta)}   +
    \frac{2C_{2}(a)}{t} \log(2JN)
  \end{multline}
  provided that $N\geq 2$ is chosen so that
  \begin{equation}
    \label{eq:cond:a}
    N      \geq         \frac{\beta}{2-\beta} \frac{\log(t)     +
      \log(C_{3})}{\log(2)}
  \end{equation}
  where  $C_{3}:=  (v_{2}/\gamma)^{(2-\beta)/\beta}/(2^{5-\beta}\gamma)$  with
  $v_{2}$ given by~\eqref{eq:def:v2}.  Moreover, it also holds that
  \begin{multline}
    \label{eq:cor:b}
    \Exp           \left[          \widetilde{R}_{\widehat{j}_t,t}           -
      \widetilde{R}_{t}(\theta^{\circ})        -        (1        +        2a)
      \left(\widetilde{R}_{\widetilde{j}_t, t} -
        \widetilde{R}_{t}(\theta^{\circ})\right) \right] \\
    \leq      3\left(\frac{C_{1}'(a)}{|\calA|/(t^{\beta}\deg(\calG))}     \log
      (2JN')\right)^{1/(2-\beta)}   +   \frac{2C_{2}'(a)}{|\calA|/\deg(\calG)}
    \log(2JN')
  \end{multline}
  provided that $N'\geq 2$ is chosen so that
  \begin{equation}
    \label{eq:cond:b}
    N' \geq \frac{\beta}{2-\beta} \frac{\log(|\calA|/(t^{\beta}\deg(\calG))) + 
      \log(C_{3}')}{\log(2)} 
  \end{equation}
  where $C_{3}':= b_{1}/(2^{6+2\beta}e^{2}\gamma)$.
\end{corollary}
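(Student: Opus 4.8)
The plan is to bound $\Exp[D_{t}]$, where $D_{t} := \widetilde{R}_{\widehat{j}_t,t} - \widetilde{R}_{t}(\theta^{\circ}) - (1+2a)(\widetilde{R}_{\widetilde{j}_t,t} - \widetilde{R}_{t}(\theta^{\circ}))$ is the random variable whose expectation appears in \eqref{eq:cor:a} and \eqref{eq:cor:b}, by integrating the tail inequalities of Theorem~\ref{thm:main:one}. First I would record that, by \textbf{A\ref{assum:A2}}, $|\widetilde{R}_{j,t} - \widetilde{R}_{t}(\theta^{\circ})| \le b_{1}$ for every $j \in \llbracket J\rrbracket$ --- and likewise with $\widehat{j}_{t}$ or $\widetilde{j}_{t}$ in place of $j$, since $\widetilde{R}_{j,t} - \widetilde{R}_{t}(\theta^{\circ})$ time-averages conditional expectations of the $|\calA|$-average of summands of the form $\Delta^{\circ}\ell(\cdot)$, each bounded by $b_{1}$ in sup norm. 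Hence $|D_{t}| \le 2(1+a)b_{1}$, so $\Exp[D_{t}]$ is finite and $\Exp[D_{t}] \le \Exp[\max(D_{t},0)] = \int_{0}^{\infty} \Prob[D_{t} > x]\,dx$ by the layer-cake formula.

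Next, fixing $a \in ]0,1]$ and $N \ge 2$ obeying \eqref{eq:cond:a}, I would split that integral at the truncation level $\underline{x}(a,N)$: on $[0,\underline{x}(a,N)]$ the integrand is at most $1$, and on $(\underline{x}(a,N),\infty)$ it is controlled by \eqref{eq:thm:main:one:a} (for this $a$ and $N$). Using $\Prob[\,\cdot\,] \le 1$ and the elementary inequality $\min(1,u+v) \le \min(1,u)+\min(1,v)$ for $u,v \ge 0$, and extending both resulting terms back to $[0,\infty)$, this yields
\[
  \Exp[\max(D_{t},0)] \;\le\; \underline{x}(a,N) + \int_{0}^{\infty}\!\min\!\Big(1,\,2JN\,e^{-tx^{2-\beta}/C_{1}(a)}\Big)dx + \int_{0}^{\infty}\!\min\!\Big(1,\,2JN\,e^{-tx/C_{2}(a)}\Big)dx .
\]
I would then evaluate the two integrals. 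With $p := 2-\beta \in [1,2[$ and $\lambda := t/C_{1}(a)$, the first integrand equals $1$ on $[0,(\lambda^{-1}\log(2JN))^{1/p}]$ and is at most $2JN\,e^{-\lambda x^{p}}$ beyond; substituting $x = (\lambda^{-1}\log(2JN))^{1/p}+y$ on that latter range and using the super-additivity $(u+y)^{p} \ge u^{p}+y^{p}$ (valid for $p \ge 1$) bounds $2JN\,e^{-\lambda x^{p}}$ above by $e^{-\lambda y^{p}}$; since $\int_{0}^{\infty} e^{-\lambda y^{p}}dy = \lambda^{-1/p}\Gamma(1+1/p) \le \lambda^{-1/p}$ (because $\Gamma(1+1/p) \le 1$ for $1/p \in ]1/2,1]$) and $\log(2JN) \ge \log 4 > 1$, the first integral is at most $2(C_{1}(a)\log(2JN)/t)^{1/(2-\beta)}$. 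The same argument bounds the second integral by $C_{2}(a)(\log(2JN)+1)/t \le 2C_{2}(a)\log(2JN)/t$.

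Finally I would absorb the truncation level into the leading term by checking $\underline{x}(a,N) \le (C_{1}(a)\log(2JN)/t)^{1/(2-\beta)}$: raising to the power $\beta$, substituting $\underline{x}(a,N)^{\beta} = a^{\beta}2^{-N}v_{2}/\gamma$ and \eqref{eq:cond:a} in the form $2^{-N} \le (tC_{3})^{-\beta/(2-\beta)}$, the specific value $C_{3} = (v_{2}/\gamma)^{(2-\beta)/\beta}/(2^{5-\beta}\gamma)$ is exactly what makes the factor $v_{2}/\gamma$ cancel, leaving $\underline{x}(a,N) \le a(2^{5-\beta}\gamma/t)^{1/(2-\beta)} = (a/(1+a))^{2/(2-\beta)}(C_{1}(a)/t)^{1/(2-\beta)} \le (C_{1}(a)\log(2JN)/t)^{1/(2-\beta)}$. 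Summing the three pieces gives \eqref{eq:cor:a}. I would obtain \eqref{eq:cor:b} in exactly the same way from \eqref{eq:thm:main:one:b} instead: the prefactor becomes $2e^{2}JN'$, the truncation level is $\underline{x}'(a,N') = ab_{1}2^{-N'}$, the two exponential rates are $[|\calA|/(t^{\beta}\deg(\calG))]/C_{1}'(a)$ and $[|\calA|/\deg(\calG)]/C_{2}'(a)$, and $\underline{x}'(a,N')$ is shown to be dominated by the leading term using \eqref{eq:cond:b} and the value $C_{3}' = b_{1}/(2^{6+2\beta}e^{2}\gamma)$, calibrated so the $b_{1}$-dependence cancels. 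The main obstacle I anticipate is the constant-chasing in these last two steps --- recognising the stretched-exponential integral through the Gamma function, and, above all, verifying that the calibrations of $C_{3}$ and $C_{3}'$ make $\underline{x}(a,N)$ and $\underline{x}'(a,N')$ disappear into the coefficients $3(\cdot)^{1/(2-\beta)}$ of the final bounds rather than adding a lower-order residual.
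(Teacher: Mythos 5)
Your proof is structurally the same as the paper's. The paper abstracts the work into Lemma~\ref{lem:tech:lemma}---bound $\Exp[U]$ from a tail estimate $\Prob[U\ge x]\le aN\,[\exp(-x^{2-\beta}/b)+\exp(-x/c)]$ valid for $x\ge\underline{x}(N)$---and applies it twice; you simply inline that lemma's argument: write $\Exp[D_t]\le\int_0^\infty\Prob[D_t>x]\,dx$, split at the threshold $\underline{x}(a,N)$ (resp.\ $\underline{x}'(a,N')$), bound the two tail integrals separately, and absorb the threshold into the leading term via the condition on $N$. Your treatment of the stretched-exponential integral (super-additivity $(u+y)^{2-\beta}\ge u^{2-\beta}+y^{2-\beta}$ followed by $\int_0^\infty e^{-\lambda y^p}dy=\lambda^{-1/p}\Gamma(1+1/p)\le\lambda^{-1/p}$ for $p=2-\beta\in[1,2[$) is a slight variant of the paper's (change of variable $u=x^{2-\beta}/b$ plus monotonicity of $u\mapsto u^{1/(2-\beta)-1}$); both deliver the factor $2$. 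Your verification that \eqref{eq:cond:a} absorbs $\underline{x}(a,N)$ into the coefficient $3(\cdot)^{1/(2-\beta)}$ is correct, and rests on exactly the power-counting the paper relies on.

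The corresponding verification for \eqref{eq:cor:b} does not go through, however. You assert that $C_3'=b_1/(2^{6+2\beta}e^2\gamma)$ is ``calibrated so the $b_1$-dependence cancels.'' Set $M:=|\calA|/(t^\beta\deg(\calG))$. Raising $\underline{x}'(a,N')=ab_12^{-N'}$ to the power $2-\beta$ and substituting \eqref{eq:cond:b} in the form $2^{-(2-\beta)N'}\le(MC_3')^{-\beta}$ yields
\begin{equation*}
  \underline{x}'(a,N')^{2-\beta}\le(ab_1)^{2-\beta}(MC_3')^{-\beta}
  =a^{2-\beta}\,b_1^{2-2\beta}\,(2^{6+2\beta}e^2\gamma)^{\beta}\,M^{-\beta}.
\end{equation*}
The $b_1$ factor survives to the power $2-2\beta$, which is $0$ only when $\beta=1$; worse, the $M$-dependence is $M^{-\beta}$, whereas the target $C_1'(a)/M$ scales as $M^{-1}$. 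For $\beta<1$ and $M$ large---precisely the regime the Corollary is designed to exploit---the requisite inequality $\underline{x}'(a,N')^{2-\beta}\le C_1'(a)/M$ fails. The contrast with the first bound is instructive: there $\underline{x}(a,N)=a[2^{-N}v_2/\gamma]^{1/\beta}$ carries $2^{-N}$ to the power $1/\beta$, so $\underline{x}(a,N)^{\beta}\propto 2^{-N}\le(tC_3)^{-\beta/(2-\beta)}$ lines up with the target's $t^{-\beta/(2-\beta)}$; here $\underline{x}'(a,N')$ carries $2^{-N'}$ to the power $1$, and the exponents no longer match. This mismatch is admittedly latent in the Corollary's stated condition \eqref{eq:cond:b}---the paper merely asserts that Lemma~\ref{lem:tech:lemma} applies under it, without writing out the arithmetic---but your proof presents the cancellation as a fact you have verified, and that verification is incorrect for $\beta<1$.
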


\subsection{Summary of Section~\ref{subsec:SL}}
\label{subsec:summary}

Given $J$  algorithms $\widehat{\theta}_{1}, \ldots,  \widehat{\theta}_{J}$ to
learn  $\theta^{\star}$ from  $(\bar{O}_{t})_{t\geq  1}$,  the one-step  ahead
sequential  Super Learner  is a  meta-algorithm that  learns, as  data accrue,
which      one      performs      best.      In      words,      for      each
$j \in  \llbracket J\rrbracket:=\{1,  \ldots, J\}$, $\widehat{\theta}_j$  is a
procedure  that,  for  every  $t  \geq 1$,  maps  $\bar{O}_t$  to  an  element
$\theta_{j,t}$ of $\Theta$.

Strictly speaking,  the one-step  ahead sequential Super  Learner really  is an
online algorithm if each of the $J$ algorithms is online, that is, if for each
$j\in\llbracket  J\rrbracket$  and $t\geq  1$,  the  making of  $\theta_{j,t}$
consists  in  an  update  of  $\theta_{j,t-1}$ based  on  newly  accrued  data
$\bar{O}_{t}$. If  that is not  the case, then the  Super Learner is  merely a
sequential algorithm, updated at every time step $t$.

The    (unknown)    $t$-specific    measure    of    performance    of    each
$\widehat{\theta_{j}}$, $\widetilde{R}_{j,t}$ \eqref{eq:risk},  takes the form
of  an   average  cumulative  risk   conditioned  on  the   observed  sequence
$(\bar{Z}_{t})_{t \geq 1}$  introduced in Assumption~\textbf{A\ref{assum:A0}}.
The (unknown) $t$-specific oracular meta-algorithm  is indexed by the oracular
$\widetilde{j}_{t} \in \llbracket J \rrbracket$ \eqref{eq:oracle}.

We  use the  (known)  $t$-specific  empirical counterpart  $\widehat{R}_{j,t}$
\eqref{eq:erisk} of $\widetilde{R}_{j,t}$ to estimate $\widetilde{j}_{t}$ with
the   (known)   $t$-specific  $\widehat{j}_{t}$   \eqref{eq:oSL}.    Algorithm
$\widehat{\theta}_{j}$  with  $j  =  \widehat{j}_{t}$ is  the  one-step  ahead
sequential Super Learner at time $t$.

The oracular  inequalities in Corollary~\ref{cor:main} have  a familiar flavor
for  whoever  is  interested  in   Super  Learning  or,  more  generally,  the
aggregation or stacking  of algorithms.  In essence, as more  data accrue, the
expected risk of  the one-step ahead sequential Super Learner  is smaller than
$(1+a)$  ($a$  chosen   small)  times  the  expected  risk   of  the  oracular
meta-algorithm   up  to   an   error   term  of   the   form  constant   times
$(\log(J\log(\calI^{2}))/\calI^{2})^{1/(2-\beta)}$  where  $\calI$ grows  like
the amount of information available (the constant $\beta \in ]0,1]$ appears in
one  of the  assumptions).  In  \eqref{eq:cor:a}, $\calI^{2}$  equals~$t$.  In
\eqref{eq:cor:b}, $\calI^{2}$ equals $|\calA|/(t^{\beta}\deg(\calG))$.  In the
next section we  show that if the ratio  $|\calA|/\deg(\calG)$ is sufficiently
large    (both   in    absolute   terms    and   relative    to   $t$)    (see
\eqref{eq:condition:two}), then  the oracular inequality  \eqref{eq:cor:b} can
be    sharper    than    the   oracular    inequality    \eqref{eq:cor:a}    in
Corollary~\ref{cor:main}, revealing that we managed  to leverage a large ratio
$|\calA|/\deg(\calG)$ in the face of a small $t$.

\subsection{Comments}
\label{subsec:SL:comments}

\paragraph*{Leveraging a large ratio $\boldsymbol{|\calA|/\deg(\calG)}$ in the
  face of a small $\boldsymbol{t}$.}

Our  results generalize  those of~\cite{Benkeser2018}  in two  aspects. First,
they do not require assumptions akin to their assumptions $A3$ and $A4$, which
are meant  to deal with  the randomness  at play in  $\widetilde{R}_{j,t}$ and
$\Var[\Delta^{\circ}\ell(O_{\alpha,t}, Z_{t}) |  Z_{t}, F_{t-1}]$.  Instead we
exploit      a      so-called       stratification      argument      inspired
by~\cite{Cesa-Bianchi2008}.  Second,  our results  leverage the fact  that, as
explained  at the  beginning  of  Section~\ref{sec:theory}, each  $t$-specific
observation    is    a   collection    $(O_{\alpha,t})_{\alpha\in\calA}$    of
$(\alpha,t)$-specific  data   points  with  a  large   amount  of  conditional
independence between  them, as  modelled by  the conditional  dependency graph
$\calG$.   Recall that  $\deg(\calG)$  equals  1 plus  the  maximum degree  of
$\calG$. The smaller is $\deg(\calG)$ the more conditional independence we can
rely on.
 
If one  chooses $N=N'$  in \eqref{eq:cor:a} and  \eqref{eq:cor:b}, then  it is
easy  to  check that  the  two  terms in  the  right-hand  side expression  of
\eqref{eq:cor:b} are  smaller than  their counterparts in  \eqref{eq:cor:a} if
and only if
\begin{equation}
  \label{eq:condition:one}
  t^{1+\beta}    \leq     \frac{|\calA|/\deg(\calG)}{2e^{2}8^{\beta}}    \quad
  \text{and} \quad t \leq \frac{|\calA|/\deg(\calG)}{45e/2}. 
\end{equation}
Furthermore, a simple sufficient  condition for \eqref{eq:condition:one} to be
met is
\begin{equation}
  \label{eq:condition:two}
  t^{1+\beta}    \leq     \frac{|\calA|/\deg(\calG)}{2e^{2}8^{\beta}}    \quad
  \text{and}     \quad      \frac{|\calA|}{\deg(\calG)}     \geq     24e\times
  \left(3/(2e)\right)^{1/\beta}.  
\end{equation}
Thus,      if     \eqref{eq:condition:two}      is     met      (note     that
$24e\times  3/(2e)  =  36   \geq  24e\times  (3/(2e))^{1/\beta}$  whatever  is
$\beta  \in  ]0,1]$)  and  if  we   make  the  following  (valid)  choices  in
Corollary~\ref{cor:main},
\begin{equation*}
  N=N' \geq \frac{\beta}{2-\beta} \frac{\log(|\calA|/(t^{\beta}\deg(\calG))) + 
    \log(C_{3}')                                                             +
    \left(\log[C_{3}/(2e^{2}8^{\beta}C_{3}')]\right)_{+}}{\log(2)}, 
\end{equation*}
then the  oracular inequalities \eqref{eq:cor:a} and  \eqref{eq:cor:b} for the
expected risk hold  true, the latter being sharper than  the former. In words,
our analysis  does take  advantage of the  fact that  $|\calA|/\deg(\calG)$ is
large in the face of $t$ being comparatively small.

\paragraph*{A few details on the proofs.}

Theorem~\ref{thm:main:one} notably  hinges on the  Fan-Grama-Liu concentration
inequality  for martingales~\cite[Theorem  3.10  in][]{Bercu2015}  and on  the
following  result, tailored  to our  needs  and derived  from a  concentration
inequality   for   sums   of   partly   dependent   random   variables   shown
by~\citet{Janson04}.   For each  $j\in\llbracket J\rrbracket$  and $t\geq  1$,
introduce the two $(j,t)$-specific averages of conditional variances
\begin{align}
  \label{eq:var}
  \var_{j,t}
  &:=    \frac{1}{|\calA|}     \sum_{\alpha\in\calA}
    \Var\left[\Delta^{\circ}\ell(\theta_{j,\tau-1})    (O_{\alpha,t}Z_{\alpha,t})
    \middle| Z_{\alpha,t}, F_{t-1}\right],\\
  \label{eq:tildevar}
  \widetilde{\var}_{j,t}
  &:=                       \frac{1}{t}                      \sum_{\tau=1}^{t}
    \Var\left[\Delta^{\circ}\bar{\ell}(\theta_{j,\tau-1})(\bar{O}_{\tau},
    \bar{Z}_{\tau}) \middle| \bar{Z}_{\tau}, F_{\tau-1}\right]. 
\end{align}

\begin{theorem}
  \label{thm:Bernstein}
  Suppose that \textbf{A\ref{assum:A2}}  and \textbf{A\ref{assum:A3}} are met.
  For each $j \in \llbracket J\rrbracket$, $\widetilde{\var}_{j,t} \leq v_{2}$
  almost  surely  (see  \eqref{eq:def:v2}  for  the  definition  of  $v_{2}$).
  Moreover, for any $V > 0$ and all $x\geq 0$, if
  \begin{equation}
    \label{eq:calF}
    \widetilde{\calF}_{V}    :   =    \left[\max_{\tau   \in    \llbracket t\rrbracket}
      \{\var_{j,\tau}\} \leq V\right], 
  \end{equation}
  then
  \begin{equation}
    \label{eq:thm:Bernstein}
    \Prob\left[|[\widehat{R}_{j,t}    -     \widehat{R}_{t}    (\theta^{\circ})]-
      [\widetilde{R}_{j,t}  -  \widetilde{R}_{t}  (\theta^{\circ})]|  \geq  x,
      \widetilde{\calF}_{V}\right]                                                    \leq
    \exp\left(2-\frac{[|\calA|/\deg(\calG)]x^{2}}{32e^{2}V                   +
        15eb_{2}x}\right).
  \end{equation}
\end{theorem}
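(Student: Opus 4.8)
The plan is to center everything and exploit a twofold structure: a martingale-in-$\tau$ structure across time steps and, within each time step, the partly dependent structure of $(O_{\alpha,\tau})_{\alpha\in\calA}$ encoded by \textbf{A\ref{assum:A0}}. Write $D_{j,t} := [\widehat{R}_{j,t}-\widehat{R}_{t}(\theta^{\circ})] - [\widetilde{R}_{j,t}-\widetilde{R}_{t}(\theta^{\circ})]$. By \eqref{eq:risk}, \eqref{eq:bar:ell}, \eqref{eq:erisk} and the definition of $\widetilde{R}_{t}(\theta^{\circ})$, one has $D_{j,t} = t^{-1}\sum_{\tau=1}^{t} Y_{j,\tau}$ with $Y_{j,\tau} := \Delta^{\circ}\bar{\ell}(\theta_{j,\tau-1})(\bar{O}_{\tau},\bar{Z}_{\tau}) - \Exp[\Delta^{\circ}\bar{\ell}(\theta_{j,\tau-1})(\bar{O}_{\tau},\bar{Z}_{\tau})\mid\bar{Z}_{\tau},F_{\tau-1}]$, and $|\calA|\,Y_{j,\tau}$ is the centered version of the sum $\sum_{\alpha\in\calA}\Delta^{\circ}\ell(\theta_{j,\tau-1})(O_{\alpha,\tau},Z_{\alpha,\tau})$. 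Since each $\theta_{j,\tau-1}$ is $F_{\tau-1}$-measurable, \textbf{A\ref{assum:A0}} says that, conditionally on $(\bar{Z}_{\tau},F_{\tau-1})$, this collection admits $\calG$ as a dependency graph, while $\Exp[Y_{j,\tau}\mid F_{\tau-1}]=0$ by the tower rule makes $(Y_{j,\tau})_{\tau}$ a martingale difference sequence for $(F_{\tau})_{\tau}$; finally $\widetilde{\calF}_{V}=\bigcap_{\tau=1}^{t}\{\var_{j,\tau}\leq V\}$ with each $\{\var_{j,\tau}\leq V\}$ lying in $\sigma(\bar{Z}_{\tau},F_{\tau-1})$, a fact I would use to reduce the truncation to a per-step operation.

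First I would settle the deterministic claim $\widetilde{\var}_{j,t}\leq v_{2}$. For a fixed $\tau$, $\Var[\Delta^{\circ}\bar{\ell}(\theta_{j,\tau-1})(\bar{O}_{\tau},\bar{Z}_{\tau})\mid\bar{Z}_{\tau},F_{\tau-1}] = |\calA|^{-2}\Var\big[\sum_{\alpha}\Delta^{\circ}\ell(\theta_{j,\tau-1})(O_{\alpha,\tau},Z_{\alpha,\tau})\mid\bar{Z}_{\tau},F_{\tau-1}\big]$, and the variance of a sum of partly dependent variables with dependency graph $\calG$ is at most $\deg(\calG)$ times the sum of the individual conditional variances; \textbf{A\ref{assum:A1}} then lets me rewrite each of these as a variance conditional on $Z_{\alpha,\tau}$ rather than on $\bar{Z}_{\tau}$, at which point \textbf{A\ref{assum:A2}}, \textbf{A\ref{assum:A3}} and \textbf{A\ref{assum:A4}} apply. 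The explicit constants $3\pi/2$, $15$ and $64$ in \eqref{eq:def:v2} should drop out when the relevant second moment is controlled by integrating the Bernstein-type tail for partly dependent sums of \citet{Janson04} through $\Exp[W^{2}]=\int_{0}^{\infty}2u\,\Prob(|W|>u)\,du$; averaging over $\tau\leq t$ gives $\widetilde{\var}_{j,t}\leq v_{2}$ almost surely.

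For the tail bound, the core step is the conditional estimate at a single time $\tau$: working conditionally on $(\bar{Z}_{\tau},F_{\tau-1})$ and on the $\sigma(\bar{Z}_{\tau},F_{\tau-1})$-measurable event $\{\var_{j,\tau}\leq V\}$, apply the concentration inequality of \citet{Janson04} to $\sum_{\alpha}\Delta^{\circ}\ell(\theta_{j,\tau-1})(O_{\alpha,\tau},Z_{\alpha,\tau})$ — dependency graph $\calG$, hence effective sample size $|\calA|/\deg(\calG)$; almost surely bounded centered increments governed by $b_{2}$ via \textbf{A\ref{assum:A2}} (after identifying the conditional mean, which again uses \textbf{A\ref{assum:A1}}--\textbf{A\ref{assum:A0}}); sum of conditional variances at most $|\calA|V$ on that event — which yields a bound of the form $\exp\big(2-[|\calA|/\deg(\calG)]x^{2}/(32e^{2}V+15eb_{2}x)\big)$ for $\Prob[|Y_{j,\tau}|\geq x\mid\bar{Z}_{\tau},F_{\tau-1}]$ on $\{\var_{j,\tau}\leq V\}$. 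To pass from the single step to $D_{j,t}=t^{-1}\sum_{\tau}Y_{j,\tau}$ on $\widetilde{\calF}_{V}$, I would introduce the stopping time $T:=\min\{\tau:\var_{j,\tau}>V\}$ and the truncated increments $\tilde{Y}_{\tau}:=Y_{j,\tau}\mathbf{1}\{T>\tau\}$ — still a martingale difference sequence, because $\{T>\tau\}\in\sigma(\bar{Z}_{\tau},F_{\tau-1})$ and $\Exp[Y_{j,\tau}\mid\bar{Z}_{\tau},F_{\tau-1}]=0$ — which agrees with $tD_{j,t}$ on $\widetilde{\calF}_{V}=\{T>t\}$, so that $\Prob[|D_{j,t}|\geq x,\widetilde{\calF}_{V}]\leq\Prob[|\sum_{\tau\leq t}\tilde{Y}_{\tau}|\geq tx]$; a standard Chernoff/peeling argument fed with the conditional sub-gamma control just obtained, whose parameters on $\widetilde{\calF}_{V}$ are all governed by $V$, then gives \eqref{eq:thm:Bernstein}.

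The hard part will be that third step: marrying the within-step Janson inequality for partly dependent sums with the across-step martingale structure while keeping the event $\widetilde{\calF}_{V}$ tractable, and doing the bookkeeping so that all constants collapse to the displayed $32e^{2}$, $15e$ and additive $2$. A second, recurring technical point is that every one of \textbf{A\ref{assum:A2}}--\textbf{A\ref{assum:A4}} is phrased conditionally on $Z_{\alpha,t}$, whereas the natural conditioning for the dependency-graph argument is on the full summary $\bar{Z}_{t}$; \textbf{A\ref{assum:A1}} is precisely the bridge, and one must invoke it for each mixed moment $\ell(\theta_{j,\tau-1})^{\varepsilon_{1}}\ell(\theta^{\circ})^{\varepsilon_{2}}$ with $\varepsilon_{1}+\varepsilon_{2}=2$ arising from expanding $(\Delta^{\circ}\ell)^{2}$, and likewise at the level of first moments to pin down the conditional means entering $Y_{j,\tau}$.
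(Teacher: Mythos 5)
Your proposal agrees with the paper's proof only up to the centering decomposition $D_{j,t}=t^{-1}\sum_{\tau}\Zeta_{j,\tau}$; after that it takes a genuinely different, and in parts gappy, route. The paper does not use a martingale concentration inequality here at all. Its proof is a \emph{moment} argument: from Janson's exponential tail bound (Theorem~\ref{thm:Janson}) it derives a Rosenthal-type $p$-th moment inequality (Corollary~\ref{cor:Janson}, following Petrov's line of proof), applies it conditionally at each $\tau$, combines across time via the elementary convexity bound $|t^{-1}\sum_{\tau}\Zeta_{j,\tau}|^{p}\leq t^{-1}\sum_{\tau}|\Zeta_{j,\tau}|^{p}$, and finishes with Markov's inequality at an optimized order $p_x$ produced by Lemma~\ref{lem:tech}. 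The constants $32e^{2}$, $15e$ and the additive $2$ come precisely from that Markov-with-optimal-$p$ step and from the $\frac{3\pi}{2}\leq e^{2}$ estimate; they do not arise from any martingale Chernoff/peeling. Your claim at the end of the paragraph — "the variance of a sum of partly dependent variables with dependency graph $\calG$ is at most $\deg(\calG)$ times the sum of the individual conditional variances" followed by tail-integration — is also not how the paper derives $v_{2}$: it simply invokes Corollary~\ref{cor:Janson} with $p=2$, whose output \emph{is} $v_{2}$ by direct computation; your tail-integration route is reasonable in spirit but would not reproduce $v_{2}$ exactly.

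The more serious issue is the last step. Freedman/Bernstein-type martingale inequalities require either an almost-sure bound on the increments or a conditional MGF (or Bernstein-moment) control, not a conditional tail bound. Janson's Theorem~\ref{thm:Janson} as quoted here is a tail bound; upgrading it to a per-step conditional sub-gamma MGF control that can be fed into a martingale Chernoff argument is nontrivial and is nowhere near "standard." Moreover, if instead you use the only almost-sure increment bound available, $|\Zeta_{j,\tau}|\leq b_{2}$, a martingale Bernstein yields a linear (Poisson) term of order $\exp(-c\,t\,x/b_{2})$ with no $|\calA|/\deg(\calG)$ factor — strictly weaker than \eqref{eq:thm:Bernstein} in exactly the regime the paper targets, namely $|\calA|/\deg(\calG)\gg t$. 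And if you somehow do manage to carry the conditional Janson sub-gamma parameters through a martingale inequality, the natural output carries an extra factor $t$ in the exponent — a bound of a different shape than \eqref{eq:thm:Bernstein}, not the stated one. So the final step, as described, neither gives the stated constants nor the stated form; the paper's moment/Markov route is what makes the bookkeeping close. Your stopping-time reduction of $\widetilde{\calF}_V$ and the measurability checks ($\{T>\tau\}\in\sigma(\bar{Z}_{\tau},F_{\tau-1})$, $\Exp[\Zeta_{j,\tau}\mid\bar{Z}_{\tau},F_{\tau-1}]=0$) are correct, but the paper achieves the same localization much more cheaply by simply inserting the indicator $\1\{\var_{j,\tau}\leq V\}$ under the conditional expectation before applying Corollary~\ref{cor:Janson}.
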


Our  proof of  Theorem~\ref{thm:Bernstein}  consists in  deriving a  Rosenthal
inequality  from  Janson's   concentration  inequality  [\citeyear{Janson04}],
following  \citeauthor{Petrov95}'s  line  of proof  [\citeyear{Petrov95}],  in
using  a  convexity  argument,  then  in   applying  the  same  method  as  in
\citep[][Corollary  3(b)]{Dedecker01}  (inspired  by the  proof  of  Theorem~6
in~\citep{DLP84}).   Inequality~\eqref{eq:thm:Bernstein} plays  a key  role in
the  derivation of~\eqref{eq:cor:b}.   The fact  that  the first  term in  the
right-hand       side       expression      in~\eqref{eq:cor:b}       features
$|\calA|/(t^{\beta}\deg(\calG))$ and  not $|\calA|/\deg(\calG)$ may  be deemed
pessimistic but is inherent to our  scheme of proof.  Note that substituting a
sharp  Marcinkiewicz-Zygmund-like  inequality~\cite[][Theorem~2.9]{Rio09}  for
the convexity  argument that  leads to \eqref{eq:convexity:argument}  does not
solve the issue.

Furthermore, it is  noteworthy that our results extend seamlessly  to the case
that                              every                             expression
$\ell(\theta_{\tau-1})(O_{\alpha,\tau},         Z_{\alpha,\tau})$         with
$\theta_{\tau-1}$ $F_{\tau-1}$-measurable is replaced  by an expression of the
form
$\ell(\theta_{\tau-1})(O_{\alpha,\tau},         Z_{\alpha,\tau})        \times
\omega_{\tau}(O_{\alpha,\tau}, Z_{\alpha,\tau})$,  where $\omega_{\tau}$  is a
$F_{\tau-1}$-measurable weighting  function.  This  proves very useful  in the
context of  reinforcement learning,  allowing to  rely on  importance sampling
weighting.

\paragraph*{Comments on the assumptions.}

Assumptions         \textbf{A\ref{assum:A1}},        \textbf{A\ref{assum:A2}},
\textbf{A\ref{assum:A3}}, \textbf{A\ref{assum:A4}} are quite typical.  Like in
the context of the application motivating  our study, suppose for example that
each                $O_{\alpha,t}$                decomposes                as
$O_{\alpha,t}  := (Z_{\alpha,t},  X_{\alpha,t}, Y_{\alpha,t})\in  \calZ \times
\calX \times [-B,B] =: \calO$  where $X_{\alpha,t}\in\calX$ is a collection of
covariates,  $Z_{\alpha,t}\in\calZ$   is  a  fixed  summary   measure  thereof
(\textit{i.e.},  as   explained  earlier,   $Z_{\alpha,t}$  is   derived  from
$X_{\alpha,t}$ by  evaluating at $X_{\alpha,t}$  a known, fixed (in  $t\geq 1$
and  $\alpha\in\calA$) function),  and $Y_{\alpha,t}  \in[-B,B]$ is  a bounded
real-valued  outcome of  primary  interest.  Suppose  moreover  that, for  all
$\alpha\in \calA$ and  $t\geq 1$, the conditional law  of $Y_{\alpha,t}$ given
$(X_{\alpha,t}, Z_{\alpha,t})=(x,z)$,  $(Z_{\alpha',t})_{\alpha'\in\calA}$ and
$F_{t-1}$  admits the  conditional  density  $y\mapsto f^{\star}(y|x,z)$  with
respect  to some  measure $\lambda(dy)$  on  $[-B,B]$.  In  this context,  the
conditional  expectation  $y\mapsto \theta^{\star}(y|x,z)$  of  $Y_{\alpha,t}$
given $(X_{\alpha,t},  Z_{\alpha,t})=(x,z)$ (for  all $(x,z)$ in  the support,
under $\Prob$, of  any $(X_{\alpha,t}, Z_{\alpha,t})$) is  an eligible feature
of interest.

Let $\Theta$ be the set of  measurable functions on $\calX\times \calZ$ taking
their          values         in          $[-B,B]$.          Given          by
$\ell(\theta):   ((z,x,y),  z)\mapsto   (y   -   \theta(x,z))^{2}$  (for   all
$\theta     \in      \Theta$),     the     least-square      loss     function
$\ell:\Theta\to  \bbR^{\calO\times\calZ}$ satisfies  \textbf{A\ref{assum:A1}}.
In    addition,   we    can   choose    $\theta^{\circ}:=\theta^{\star}$   and
\textbf{A\ref{assum:A2}},   \textbf{A\ref{assum:A3}}   (with  $\beta=1$)   and
\textbf{A\ref{assum:A4}} are  met. The  fact that  \textbf{A\ref{assum:A3}} is
met  follows from  a  classical  argument of  strong  convexity recalled,  for
self-containedness, in Appendix~\ref{app:strong:convexity}.

\section{Anticipating the cost of natural disasters}
\label{sec:anticipating}

In  this  section,  we  apply  one-step ahead  sequential  Super  Learning  to
anticipate  the  cost   of  natural  disasters.   Section~\ref{subsec:context}
presents  the  context  and objective  in  details,  Section~\ref{subsec:data}
describes the  data that we exploit,  and Section~\ref{subsec:modeling} models
the problem  in the terms  of the theoretical  Section~\ref{sec:theory}. Then,
Section~\ref{subsec:implementation}  discusses   the  implementation   of  the
one-step  ahead  sequential  Super  Learner  and  Section~\ref{subsec:results}
presents and comments its results.

\subsection{More on the context and the objective}
\label{subsec:context}

To better  anticipate the  risks, CCR  has developed  an expertise  in natural
disasters  modeling.   Its  cat  models exploit  portfolios  and  claims  data
collected from CCR's cedents to enable  a better appreciation of the exposures
of CCR, of its cedents and of the French State.

The   natural   disasters   compensation   scheme  created   by   French   Law
n\textsuperscript{o}82-600  is triggered  when the  following three  necessary
conditions are met:
\begin{enumerate}
\item a  government decree declaring a  natural disaster must be  published in
  the French \href{https://www.journal-officiel.gouv.fr/}{Journal Officiel};
\item  the lost  and/or damaged  property must  be covered  by a  property and
  casualty insurance policy;
\item a causal  link must exist between the declared  natural disaster and the
  sustained loss and/or damage.
\end{enumerate}
The mayor of a city can request the government declaration of natural disaster
by sending a form to their prefect.   All over France, the prefects gather the
forms  and  send  them  to  the  relevant  Interministerial  Commission.   The
commission  examines all  requests  and delivers  the  declaration of  natural
disaster if additional  criteria are met. These criteria  characterize what is
considered  as a  natural disaster.   For  instance, for  drought events  (the
natural    catastrophes     that    we    focus    on,     also    known    as
\textit{subsidence\footnote{The process by  which land or buildings  sink to a
    lower  level.}  events}  in  the  literature, for  reasons  that the  next
paragraph clarifies), the criteria evaluate  the intensity of the drought.  It
is noteworthy that the criteria are  regularly updated by the commission -- we
shall  discuss further  this  point in  Section~\ref{sec:discussion}.  If  the
Interministerial  Commission delivers  a favorable  opinion, confirmed  by the
publication  in  the Journal  Officiel  of  a  government decree  declaring  a
disaster,  then  CCR  indemnifies  the  insurance  companies  once  they  have
indemnified the policyholders.

As revealed  earlier, we focus on  drought events.  Such events  are caused by
the  clay  shrinking  and  swelling  during  a  calendar  year  (and  must  be
distinguished  from  \textit{agricultural}  drought events).   Drought  events
entail cracks  on buildings, which can  be covered by an  insurance policy. In
order  to manage  the risks  inherent  in the  natural disasters  compensation
scheme,  CCR  must  anticipate  the  costs  generated  by  drought  events  in
particular. This  is crucial for  the pricing of  non-proportional reinsurance
treaties,  and  for reserving  (that  is  to  say, to  anticipate  forthcoming
payments).  The present study tackles the  challenge of predicting the cost of
drought events from a data set that we describe next.

\subsection{Data}
\label{subsec:data}

The  data set  that we  exploit  to predict  the  costs of  drought events  is
composed of  several data sets  of different  natures.  The data  are commonly
grouped in two classes, depending on whether they concern the natural disaster
itself  or any  of the  remaining relevant  characteristics that  complete the
description of the  financial impact of the natural disaster  on the insurance
industry.  We  choose to  group the  data in two  other classes,  depending on
whether they come from the cedents or from another source.

\paragraph*{Data from cedents.}

CCR  reinsures  90\%  of  the   French  natural  disasters  insurance  market.
Contractually, CCR's  cedents must share their  portfolios (\textit{i.e.}, the
location and characteristics of the insured  goods) and claims data. Thanks to
this mechanism, CCR has gathered a large  data set that runs from 1990 to this
day.

\paragraph*{Data from other sources.}

The data from cedents are enriched  with other data collected from four French
organizations.  The  National Institute  for Statistical and  Economic Studies
(INSEE) and  Geographic National  Institute (IGN)  provide information  on the
cities (population,  area, proportions of  buildings by years  of construction
for INSEE; tree  coverage rate for IGN).  The French  Geological Survey (BRGM)
provides a mapping of the  clay shrinkage-swelling hazards in France. Finally,
Météo France provides a soil wetness  index (SWI).  This last feature consists
of time  series of values (one  every decade) ranging between  -3.33 (very dry
soil) and 2.33 (very wet soil).  Figure~\ref{graph:SWI} presents five one-year
chunks of SWI time series.

\begin{figure}
  \centering%
  \includegraphics[scale=1]{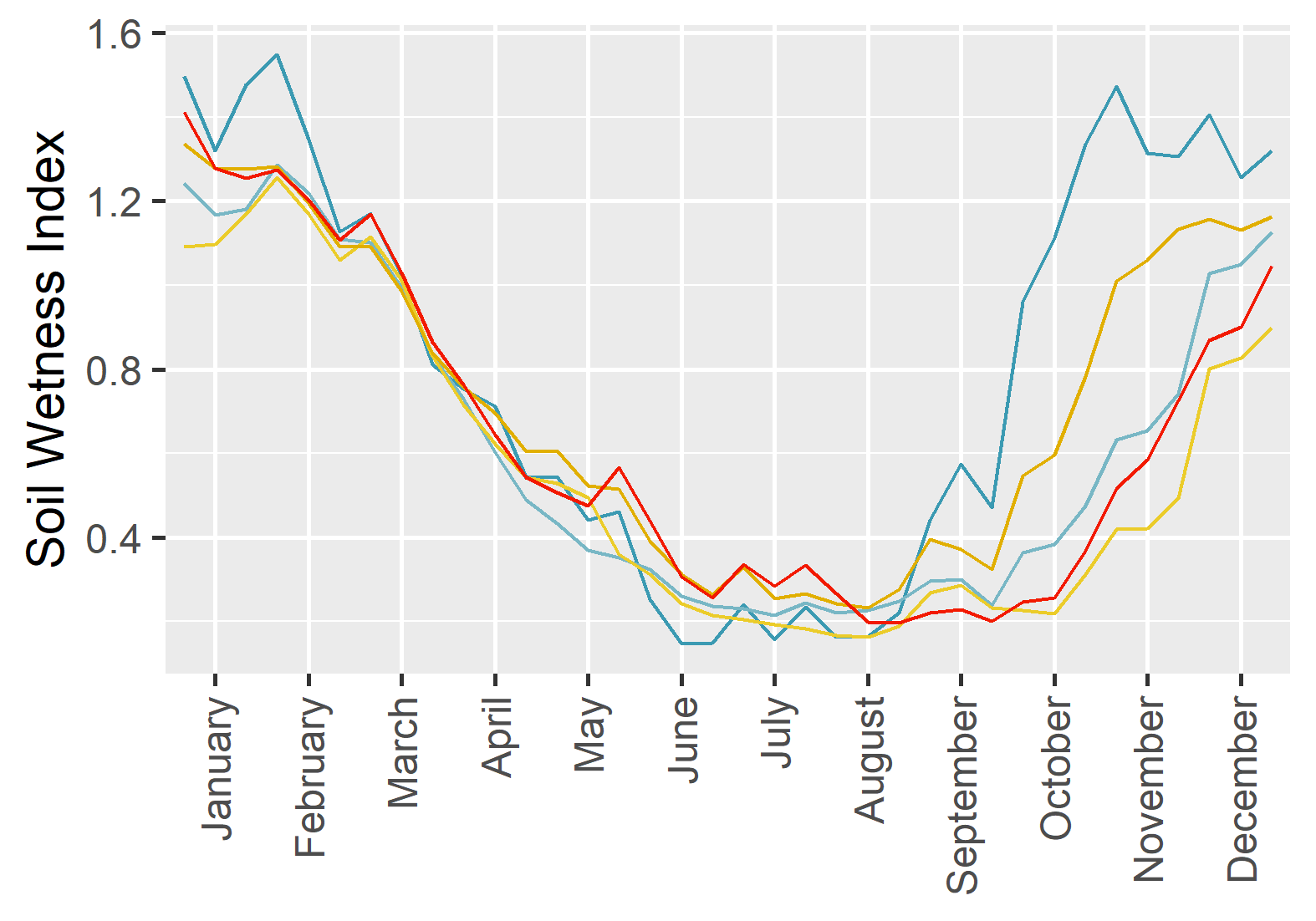}
  \caption{Chunks from  five arbitrarily  chosen time  series of  Soil Wetness
    Index (SWI) over the  course of one year.  It does not  come as a surprise
    that the soil is drier during summer than during winter.}
\label{graph:SWI}
\end{figure}

\paragraph*{Working at the city-level.}

It   is   noteworthy   that   the   spatial  resolution   of   SWI   data   is
$8 \times 8$km$^2$, which is much  larger than the 90\%-quantile of the French
cities area (30  km$^2$; only 1.3\% of  the French cities have  an area larger
than 65 km$^2$  -- data from 2014).   This issue will be  discussed further in
Section~\ref{sec:discussion}.   It justifies  why  we choose  to  work at  the
city-level as opposed  to the house level, by aggregating  at every time point
all data from each city into a single, time and city-specific observation.

\begin{itemize}
\item \textit{City-level  costs of drought  events.} For every time  point and
  each city,  the city-level  cost of drought  event is the  sum of  all house
  claims over the city's area.
\item  \textit{City-level  SWI.} For  every  time  point  and each  city,  the
  city-level SWI is the  convex average of the SWIs of  the $8 \times 8$km$^2$
  squares that overlap the city's area,  the weights being proportional to the
  areas of the intersections. 
\item \textit{City-level description.}  For every  time point and each city, a
  city-level description encapsulates the  city's profile.  The description is
  multi-faceted.   It contains:  an  indicator  of whether  or  not a  natural
  disaster was declared by the  government; the overall insured value obtained
  by summing the insured values over the  city's area; a summary of the city's
  clay hazard,  defined as the proportions  of houses falling in  each of four
  categories  of  clay   hazard;  a  summary  of  the   city's  dwelling  age,
  \textit{i.e.}, how  old houses  are, under  the form  of the  proportions of
  houses falling in each of four  categories; the climatic and seismic zones (a
  five-category  and  a four-category  variables);  a  summary of  the  city's
  vegetation; the city's number of houses, population, area, average altitude,
  and density, defined as  the ratio of the number of houses  to the area.  In
  addition, a  variety of features  are described by quantiles  that summarize
  distributions (\textit{e.g.},  the 30-quantiles  of the distribution  of the
  house-specific product of  SWI and insured value, or the  30-quantile of the
  distribution of the  house-specific product of the ground  slope and insured
  value, to mention just a few).  Overall, the city-level description consists
  of a little more than 430 variables.
\end{itemize}

\subsection{Modeling}
\label{subsec:modeling}

\paragraph{The sequence of observations.}

In the  context of  the anticipation  of the cost  of natural  disasters, each
$O_{\alpha,t}$                          decomposes                          as
$O_{\alpha,t} := (Z_{\alpha,t}, X_{\alpha,t}, Y_{\alpha,t})$ where
\begin{itemize}
\item $Y_{\alpha,t}  \in [0, B]$ is  the city-level cost of  the drought event
  for city $\alpha$ at year $t$,
\item $Z_{\alpha,t}$ is the city-level SWI for city $\alpha$ at year $t$,
\item  $X_{\alpha,t}$ is  the  city-level description  that encapsulates  city
  $\alpha$'s    profile    at    year     $t$,    including    an    indicator
  $W_{\alpha,t} \in \{0,1\}$  that equals 1 if and only  if a natural disaster
  has been declared by the government for that city and that year.
\end{itemize}
By  convention, $Y_{\alpha,t}  = 0$  if $W_{\alpha,t}  = 0$  (that is,  in the
absence  of  a declaration  of  natural  disaster).  Formally,  $X_{\alpha,t}$
includes  $Z_{\alpha,t}$.    For  notational   simplicity,  we   rewrite  each
$X_{\alpha,t}$                                                              as
$(W_{\alpha,t}, X_{\alpha,t},  Z_{\alpha,t}) \in  \{0,1\} \times  \calX \times
\calZ$, the  ``new'' $X_{\alpha,t}$ being the  ``old'' $X_{\alpha,t}$ deprived
of $Z_{\alpha,t}$ (but not of $W_{\alpha,t}$).

\paragraph{The feature of interest and related loss function.}

We assume that,  for all $\alpha\in \calA$ and $t\geq  1$, the conditional law
of  $Y_{\alpha,t}$ given  $(W_{\alpha,t},X_{\alpha,t}, Z_{\alpha,t})=(1,x,z)$,
$(Z_{\alpha',t})_{\alpha'\in\calA}$ and $F_{t-1}$ admits a conditional density
$y\mapsto f^{\star}(y|x,z)$ with respect to  some measure on $[0,B]$.  In this
context,  the  conditional  expectation  $y\mapsto  \theta^{\star}(y|x,z)$  of
$Y_{\alpha,t}$ given  $(W_{\alpha,t},X_{\alpha,t}, Z_{\alpha,t})=(1,x,z)$ (for
all $(x,z)$ in the support of any $(X_{\alpha,t}, Z_{\alpha,t})$ conditionally
on $W_{\alpha,t}=1$) is an eligible feature of interest.

Set $\calO := \{0,1\} \times \calX \times \calZ \times [0,B]$ and let $\Theta$
be the set of measurable functions  on $\calX\times \calZ$ taking their values
in  $[0,B]$   and  such   that  $\theta(x,z)   =  0$   if  $w=0$.    Given  by
$\ell(\theta): ((w,x,z,y), z)\mapsto (y - \theta(x,z))^{2} \1\{w=1\}$ (for all
$\theta     \in      \Theta$),     the     least-square      loss     function
$\ell:\Theta\to  \bbR^{\calO\times\calZ}$ satisfies  \textbf{A\ref{assum:A1}}.
In    addition,   we    can   choose    $\theta^{\circ}:=\theta^{\star}$   and
\textbf{A\ref{assum:A2}},   \textbf{A\ref{assum:A3}}   (with  $\beta=1$)   and
\textbf{A\ref{assum:A4}} are  met. The  fact that  \textbf{A\ref{assum:A3}} is
met  follows from  the  classical  argument of  strong  convexity recalled  in
Appendix~\ref{app:strong:convexity}.

\paragraph{Of $\boldsymbol{\calA}$ and $\boldsymbol{\calG}$.}

Here,  $\calA$ represents  the set  of  French cities.   The dependency  graph
$\calG$ used to  model the amount of  conditional independence operationalizes
two different types  of spatial dependence: one  \textit{geographical} and the
other  \textit{administrative}.   The  former corresponds  to  the  dependency
caused  by the  proximity between  two cities  in geological  and meteorological
terms  as well  as in  terms  of vegetation.   The latter  corresponds to  the
dependency  caused by  the administrative  proximity between  two cities  that
belong  to a  same  ``communauté de  communes''  (\textit{i.e.}, community  of
communes,  a  federation  of  municipalities). This  second  type  of  spatial
dependence is less obvious than the first  one. It arises from the fact that a
declaration of natural disaster must be requested  by the mayor of a city (see
Section~\ref{subsec:context}).  If, in a small  federation, a mayor makes such
a request, then it is likely that the other mayors will as well.

The  cardinality  of $\calA$  is  of  order  $36,000$.   In 2019,  there  were
approximately  $1,000$ federations  of  municipalities  in France,  regrouping
approximately  $26,000$  cities.  The federation  regrouped  approximately  30
cities in average.

\subsection{Implementation}
\label{subsec:implementation}

We implement our  statistical analysis in \texttt{R}~\citep{R}.  All our Super
Learners    are    implemented     based    on    the    \texttt{SuperLearner}
library~\citep{SuperLearner}.

\paragraph*{Base algorithms.}

The  base  algorithms   $\widehat{\theta}_{1},  \ldots,  \widehat{\theta}_{J}$
belong to one among four classes  of algorithms: the class of algorithms based
on  small to  moderate-dimensional working  models (linear  regression; lasso,
ridge  and  elastic   net  regressions~\citep{glmnet};  multivariate  adaptive
regression  splines~\citep{earth}; support  vector regression~\citep{kernlab};
gradient  boosting   with  linear  boosters~\citep{xgboost});  the   class  of
algorithms based  on trees (CART~\citep{rpart},  random forest~\citep{ranger},
gradient   boosting  with   tree  boosters~\citep{xgboost});   the  class   of
$k$-nearest   neighbors  algorithms;   the  class   of  algorithms   based  on
high-dimensional  neural networks~\citep{keras}.   Most of  the aforementioned
algorithms contribute several base algorithms  through the choice of different
hyper-parameters.  The $k$-nearest-neighbors  algorithms are customized.  Each
of them focuses on one of the quantiles summarizing a feature of interest (see
Section~\ref{subsec:data})  and  uses  the Kolmogorov-Smirnov  distance  as  a
measure of  similarity between every  pair of quantiles (viewed  as cumulative
distribution functions).

\paragraph*{Discrete and continuous one-step ahead sequential Super Learners.}

The one-step  ahead sequential Super  Learner that learns  $\theta^{\star}$ by
mapping $\bar{O}_{1},  \ldots, \bar{O}_{t}$ to  $\theta_{\widehat{j}_t,t}$ for
every $t \geq 1$ \eqref{eq:oSL} is known as a \textit{discrete} Super Learner.
The \textit{continuous}  Super Learner is the  \textit{discrete} Super Learner
when the  collection of  base algorithms consists  of all  convex combinations
$\sum_{j \in \llbracket J  \rrbracket} \sigma_{j} \widehat{\theta}_{j}$ of the
base  algorithms  $\widehat{\theta}_{1}, \ldots,  \widehat{\theta}_{J}$  where
$(\sigma_{1},  \ldots,  \sigma_{J})$  ranges   over  the  discretized  simplex
$\{(\sigma_{1},  \ldots,   \sigma_{J})  \in  \{(k-1)/K  :   k  \in  \llbracket
K+1\rrbracket\}^{J}  : \sum_{j\in\llbracket  J \rrbracket}  \sigma_{j} =  1\}$
with a large integer $K$. Note that the cardinality of this collection of base
algorithms is of order $\Theta(K^{J})$ and  much larger than $J$.  This is not
overly problematic because $J$  in \eqref{eq:cor:a} and \eqref{eq:cor:b} plays
a    role   through    $\log(J)/\calI^{2}$   with    $\calI^{2}   =    t$   or
$\calI^{2}  = |\calA|/(t^{\beta}  \deg(\calG))$, one  of them  at least  being
supposed large.

For     every    $t     \geq    1$,     the    $\sigma$-specific     algorithm
$\sum_{j \in  \llbracket J  \rrbracket} \sigma_{j}  \widehat{\theta}_{j}$ maps
$\bar{O}_{1},               \ldots,              \bar{O}_{t}$               to
$\sum_{j  \in  \llbracket  J  \rrbracket} \sigma_{j}  \theta_{j,t}$.   From  a
computational  point of  view, we  do not  use the  larger collection  of base
algorithms obtained by convex combination.  Instead, we directly solve
\begin{equation}
  \label{eq:continuous:SL}
  \argmin_{\sigma      \in      \Sigma}     \frac{1}{t}      \sum_{\tau=1}^{t}
  \bar{\ell}\left({\textstyle\sum_{j \in  \llbracket J  \rrbracket} \sigma_{j}
      \theta_{j,\tau-1}}\right) (\bar{O}_{\tau}, \bar{Z}_{\tau}) 
\end{equation}
(where  $\Sigma$  is  the  whole  simplex), which  can  be  interpreted  as  a
convexified version of \eqref{eq:oSL}.
 
\paragraph*{More one-step ahead sequential  Super Learners and the overarching
  one-step ahead sequential Super Learner.}

We propose and implement two more  extensions. The first extension builds upon
the interpretation of \eqref{eq:continuous:SL}  as the so called meta-learning
task   consisting    in   predicting   $Y_{\alpha,\tau}$   under    the   form
$\sum_{j    \in   \llbracket    J\rrbracket}   \sigma_{j}    \theta_{j,\tau-1}
(X_{\alpha,\tau},             Z_{\alpha,\tau})$            for             all
$\alpha\in\calA, 1 \leq \tau\leq t$.   We consider other meta-learning methods
$m$        to         predict        $Y_{\alpha,\tau}$         based        on
$\theta_{1,\tau-1}(X_{\alpha,\tau},         Z_{\alpha,\tau})$,         \ldots,
$\theta_{J,\tau-1}(X_{\alpha,\tau},                         Z_{\alpha,\tau})$,
$X_{\alpha,\tau},             Z_{\alpha,\tau}$             for             all
$\alpha\in\calA, 1 \leq \tau\leq t$.  Each meta-learning method $m$ yields its
own $m$-specific (discrete or continuous) Super Learner.

The  second  extension   builds  upon  the  first  one.    The  collection  of
$m$-specific  Super  Learners  can  be  considered as  a  collection  of  base
algorithms. This raises the question of  learning which one performs best.  To
answer this  question, we can  rely on  what we call  the \textit{overarching}
(discrete or continuous) Super Learner.

\paragraph*{Meta-learning methods and overarching meta-learning method.}

In  view  of the  four  classes  of base  algorithms  described  in the  first
paragraph of this  section, the meta-learning methods belong to  one among two
classes   of   methods:   the   class   of   methods   based   on   small   to
moderate-dimensional  working  models   (linear  regression  with  nonnegative
coefficients~\citep{nnls};      lasso,     ridge      and     elastic      net
regressions~\citep{glmnet};    support   vector    regression~\citep{kernlab};
gradient  boosting   with  linear  boosters~\citep{xgboost});  the   class  of
algorithms   based   on   trees   (extra    trees,   a   variant   of   random
forest~\citep{ranger}; gradient boosting  with tree boosters~\citep{xgboost}).
The overarching  Super Learner uses  the meta-learning method based  on linear
regression  with nonnegative  coefficients.   The  discrete overarching  Super
Learner selects  which among  the Super Learners  (viewed as  base algorithms)
performs best.  The  continuous overarching Super Learner  learns which convex
combination of the Super Learners (viewed as base algorithms) performs best.

Overall, we implement 27 base algorithms and 48 Super Learners.

\subsection{Results}
\label{subsec:results}

\begin{table}[htbp]
  \centering
  \begin{tabular}{cccccc}
    min.&1st qu.& median&mean&3rd qu.&max\\\hline
    23&162.5&607&1072.3&1921.5&4436
  \end{tabular}
  \caption{Quantiles and  mean of  the yearly  numbers of  cities for  which a
    declaration  of natural  disaster was  delivered  by the  government as  a
    result  of   a  drought  event.  The   time  series  runs  from   1995  to
    2017. Overall, we count 24,663 such declarations.}
  \label{tab:quantiles:declarations}
\end{table}

We observe  the time series $(\bar{O}_{t})_{t\geq  1}$ from 1995 to  2017.  We
also observe  the years 2018 and  2019 but do  not know yet the  city-level or
global costs  of drought events for  these two years. Overall  we count 24,663
observations  $\bar{O}_{\alpha,\tau}$  for  which  a  declaration  of  natural
disaster was delivered by  the government as a result of  a drought event. The
quantiles and mean of the yearly numbers  of cities for which a declaration of
natural      disaster       was      delivered      are       reported      in
Table~\ref{tab:quantiles:declarations}.

\begin{figure}[htbp]
  \centering%
  \includegraphics[scale=1]{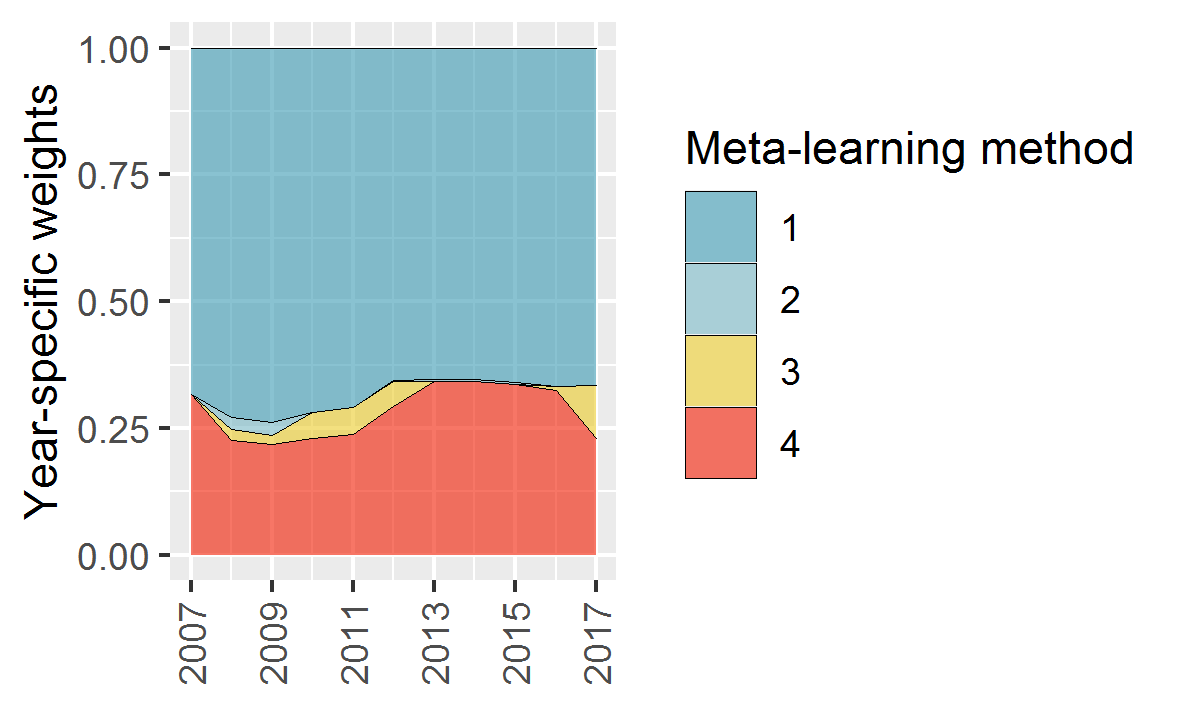}
  \caption{Evolution  (from 2007  onward)  of the  weights  attributed by  the
    overarching Super  Learner to 4 of  its base algorithms, each  one a Super
    Learner  itself  using  its  own meta-learning  method.   The  other  base
    algorithms get no weight at all (on this time window). }
\label{fig:overarching-weights}
\end{figure}

Among  the  48  Super  Learners,  the  overarching  continuous  Super  Learner
attributes positive weights to the  same four Super Learners consistently from
2007 to 2017, see Figure~\ref{fig:overarching-weights}. Moreover, the discrete
overarching Super Learner is consistently one of these four Super Learners.

\begin{figure}[htbp]
  \centering%
  \includegraphics[scale=1]{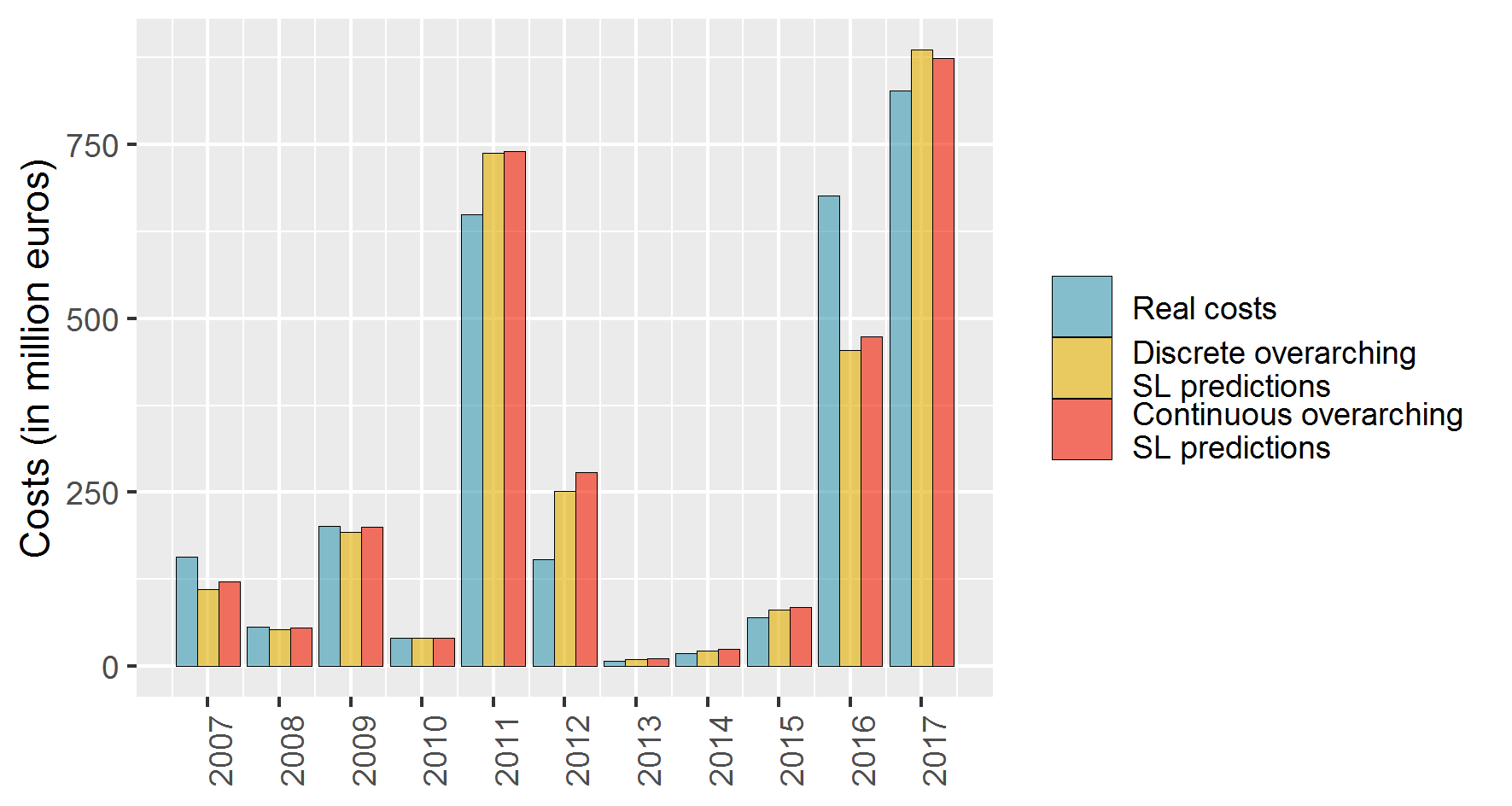}
  \caption{Presentation (from 2007 onward) of the real costs of drought events
    and  their predictions  made by  the discrete  and continuous  overarching
    Super Learners. }
\label{fig:predictions-overarching}
\end{figure}

Figure~\ref{fig:predictions-overarching}  represents   the  global   costs  of
drought events as  predicted by the discrete and  continuous overarching Super
Learners.   We observe  that  the discrete  and  continuous overarching  Super
Learners make  predictions that  are consistent each  year.  The  experts, who
naturally focus on the years for which the real costs happen to be the largest
because the financial stake is then the highest, deem them very satisfactory.

Overall, the averages (over the years) of the ratios of the predicted costs to
the  real costs  equal 106\%  (discrete overarching  Super Learner)  and 112\%
(continuous overarching Super  Learner). The ratios range  from 67\% (discrete
overarching Super Learner) and 70\% (continuous overarching Super Learner), in
2016,  to 164\%  (discrete overarching  Super Learner)  and 180\%  (continuous
overarching Super Learner), in 2012. The year  2016 is known by the experts to
be atypical, and challenging, because the average cost (understood here as the
ratio  of the  total cost  of the  year's drought  event to  the corresponding
number  of  declarations   of  natural  disaster  delivered   that  year)  is
particularly  large.  Conversely,  the  average  cost  in  the  year  2012  is
particularly small. 

\section{Discussion}
\label{sec:discussion}

We develop  and analyze a  meta-algorithm that  learns, as data  accrue, which
among $J$ base algorithms better learns  a feature $\theta^{\star}$ of the law
$\Prob$   of  a   sequence  $(\bar{O}_{\tau})_{\tau   \geq  1}$,   where  each
$\bar{O}_{\tau}$       consists       of       a       finite       collection
$(\bar{O}_{\alpha,\tau})_{\alpha \in \calA}$ of  many slightly dependent data.
We show  that the  meta-algorithm, an  example of  Super Learner,  leverages a
large ratio  $|\calA|/\deg(\calG)$ (a  measure of  the amount  of independence
among the $\tau$-specific $\bar{O}_{\alpha,\tau}$,  $\alpha \in \calA$) in the
face of a small number $t$ of time points where the time series is observed --
see  the  summary presented  in  Section~\ref{subsec:summary}.   The study  is
motivated  by the  challenge posed  by the  appreciation of  the exposures  to
drought events of CCR,  of its cedents and of the  French State.  We implement
and use two Super Learners to learn to assess the (global) costs by predicting
the (local) costs at the  city-level -- see Section~\ref{subsec:results} for a
summary of our results.

Reliable prediction of the cost of  a drought event \textit{must} rely on some
measure of  the drought's  intensity. We  exploit a  soil wetness  index (SWI)
provided by Météo  France. Because the spatial resolution of  SWI data is much
larger than the 90\%-quantile of the French  cities area, we choose to work at
the  city-level rather  than  at  the address-level,  by  aggregating all  the
address-specific  information at  the  city-level.  In  future  work, we  will
\textit{learn}  a  better  measure  of  the drought's  intensity  at  a  finer
resolution by  combining different  sources of  information pertaining  to the
soil wetness (SWI, rainfall,  nature of the soil, to name  just a few).  Since
we  know that  costs  can  vary dramatically  at  the  address-level, we  also
consider to later  try and enhance our  predictions by zooming in  back to the
address-level, thanks to the finer resolution measure of soil wetness.

In Section~\ref{subsec:context}, we explained that the criteria characterizing
what  is considered  as a  natural disaster  by the  relevant Interministerial
Commission are regularly updated.  Moreover, even  on the narrow time frame of
our study,  climate change may have  affected the severity of  droughts on the
French  territory.  From  a theoretical  viewpoint,  the marginal  law of  the
$(\alpha,\tau)$-specific  covariate   $Z_{\alpha,\tau}$  that   describes  the
severity  of  the   drought  depends  on  $\tau$.   We  tried   to  give  each
$O_{\alpha,\tau}$   an   $(\alpha,\tau)$-specific   weight   to   target   the
$(\alpha,t)$-specific  marginal  law  of $Z_{\alpha,t}$  when  addressing  the
prediction of the cost  of year $t$. If any, the benefits  were dwarfed by the
increase in variability caused by the learned weighting scheme.

\paragraph*{Acknowledgments.}

The  authors  thank  Jérôme  Dedecker  (MAP5, Université  de  Paris)  for  his
enlightened advice and  Thierry Cohignac (Caisse Centrale  de Réassurance) for
his suggestions to improve the manuscript.

\bibliography{osasl}

\appendix

\section{A classical strong convexity argument}
\label{app:strong:convexity}

Suppose   that   $\Theta$   is   convex,    and   that   the   loss   function
$\ell : \Theta \to \bbR^{\calO}$ is $a_{1}$-Lipschitz,
\begin{equation}
  \label{eq:strong:convexity:one}
  |\ell(\theta_{1}) -  \ell(\theta_{2})| \leq a_{1} |\theta_{1}  - \theta_{2}|
\end{equation}
and     $a_{2}$-strongly    convex:     for    all     $s\in    [0,1]$     and
$\theta_{1}, \theta_{2}\in \Theta$,
\begin{equation*}
  \ell(s\theta_{1}  +   (1-s)\theta_{2})  -  \tfrac{a_{2}}{2}   (s\theta_{1}  +
  (1-s)\theta_{2})^{2}       \leq       s       \left(\ell(\theta_{1})       -
    \left(\tfrac{a_{2}}{2}\theta_{1}\right)^{2}\right)         +         (1-s)
  \left(\ell(\theta_{2}) - \left(\tfrac{a_{2}}{2}\theta_{2}\right)^{2}\right) 
\end{equation*}
(both  inequalities above  are  understood pointwise).   Then  the modulus  of
continuity         of        $\ell$         is        lower-bounded         by
$\rho   \mapsto  \tfrac{a_{2}}{8}\rho^{2}$   in  the   sense  that,   for  all
$\theta_{1}, \theta_{2} \in \Theta$,
\begin{equation}
  \label{eq:strong:convexity:two}
  \tfrac{1}{2}     \left(\ell(\theta_{1})    +     \ell(\theta_{2})\right)    -
  \ell\left(\tfrac{1}{2}(\theta_{1}  + \theta_{2})\right)  \geq \tfrac{a_{2}}{8}
  (\theta_{1} - \theta_{2})^{2}
\end{equation}
(pointwise). Let  $P$ be a  law on $\calO$  such that $P\ell(\theta)$  is well
defined  for all  $\theta\in\Theta$,  where we  note  $Pf:=\int fdP$.   Choose
$\theta^{\circ}           \in            \Theta$           such           that
$P\ell(\theta^{\circ})  \leq P\ell(\theta)$  for all  $\theta\in\Theta$. Then,
for all $\theta\in\Theta$,
\begin{align*}
  \tfrac{1}{2} P(\ell(\theta) + \ell(\theta^{\circ}))
  &\geq P\ell(\tfrac{1}{2}(\theta  + \theta^{\circ}))  + \tfrac{a_{2}}{8}
    P(\theta - \theta^{\circ})^{2}\\
  &\geq P\ell(\theta^{\circ}) + \tfrac{a_{2}}{8}
    P(\theta - \theta^{\circ})^{2}\\
  &\geq        P\ell(\theta^{\circ})         +        \tfrac{a_{2}}{8a_{1}^{2}}
    P(\ell(\theta) - \ell(\theta^{\circ}))^{2},
\end{align*}
where the  first inequality follows from  \eqref{eq:strong:convexity:two}, the
second holds by convexity of $\Theta$  and choice of $\theta^{\circ}$, and the
third one follows from \eqref{eq:strong:convexity:one}. Therefore,
\begin{equation*}
  P(\ell(\theta^{\circ})  -  \ell(\theta))^{2} \leq  \tfrac{4a_{1}^{2}}{a_{2}}
  P(\ell(\theta) - \ell(\theta^{\circ})), 
\end{equation*}
which concludes the argument.

\section{Proofs}
\label{app:proofs}

\subsection{Proof of Theorem \ref{thm:main:one}}

The proof unfolds in three steps.

\paragraph{Step 1: an algebraic decomposition.}
For all $j \in \llbracket J\rrbracket$, $t \geq 1$ and $\theta\in \Theta$, let us define
\begin{align*}
  & \widetilde{H}_{j,t} := \widetilde{R}_{j,t} - \widetilde{R}_{t}(\theta^{\circ}),
    \quad         \widehat{H}_{j,t}        :=         \widehat{R}_{j,t}        -
    \widehat{R}_t(\theta^{\circ}), \quad \text{and}\\
  & \Delta^{\circ}\bar{\ell}(\theta) (\bar{O}_{t}, \bar{Z}_{t}) :=
    \bar{\ell}(\theta)(\bar{O}_{t},                \bar{Z}_{t})                -
    \bar{\ell}(\theta^{\circ})(\bar{O}_{t}, \bar{Z}_{t})
\end{align*}
($\bar{\ell}(\theta)$  is  defined  in~\eqref{eq:bar:ell}).   Fix  arbitrarily
$a > 0$.  An algebraic decomposition at  the heart of all studies of the Super
Learner~\cite[see,   \textit{e.g},][]{Dudoit2005,SL2007,Benkeser2018})  states
that    the    excess    risk    of    the    Super    Learner    (that    is,
$\widetilde{H}_{\widehat{j}_{t},t}$)  can be  bounded by  $(1+2a)$ times  the excess
risk  of  the oracle  (that  is,  $\widetilde{H}_{\widetilde{j}_{t},t}$), plus  some
remainder terms:
\begin{align}
  \notag
  \widetilde{H}_{\widehat{j}_{t},t} 
  &  \leq (1  + 2a)  \widetilde{H}_{\widetilde{j}_{t},t} +  A_{\widehat{j}_t,t}(a) +
    B_{\widetilde{j}_t,t}(a) \\
  \label{eq:SL_decomposition}
  &  \leq (1  +  2a) \widetilde{H}_{\widetilde{j}_{t},t}  +  \max_{j \in  \llbracket J\rrbracket}
    \{A_{j,t}(a)\} + \max_{j \in \llbracket J\rrbracket} \{B_{j,t}(a)\} 
\end{align}
where
\begin{equation*}
  A_{j,t}(a) := (1+a) \left(\widetilde{H}_{j,t}  - \widehat{H}_{j,t} \right) -
  a   \widetilde{H}_{j,t}  \quad   \text{and}   \quad   B_{j,t}(a)  :=   (1+a)
  \left(\widehat{H}_{j,t}     -     \widetilde{H}_{j,t}    \right)     -     a
  \widetilde{H}_{j,t}. 
\end{equation*}

The  first  terms in  the  definitions  of  $A_{j,t}(a)$ and  $B_{j,t}$  equal
$\pm(1+a)$ times
\begin{equation*}
  \frac{1}{t}\sum_{\tau=1}^{t}
  \Big(\Delta^{\circ}\bar{\ell}(\theta_{j,\tau-1}) 
  (\bar{O}_{\tau},                     \bar{Z}_{\tau})                     -
  \Exp\left[\Delta^{\circ}\bar{\ell}(\theta_{j,\tau-1})          (\bar{O}_{\tau},
    \bar{Z}_{\tau}) \middle| \bar{Z}_{\tau}, F_{\tau-1}\right]\Big),
\end{equation*}
that is  as the  average of  the $t$  first terms  of a  martingale difference
sequence.  As  for the shared second  term in the definitions  of $A_{j,t}(a)$
and $B_{j,t}$, it satisfies $-a\widetilde{H}_{j,t} \leq 0$. The second step of
the proof  consists in  exploiting two  so-called Bernstein's  inequalities to
control     the    probabilities     $\Prob[A_{j,t}(a)     \geq    x]$     and
$\Prob[B_{j,t}(a) \geq x]$ for $x\geq 0$.

\paragraph{Step   2:  Bounding   positive  deviations   of  $A_{j,t}(a)$   and
  $B_{j,t}(a)$.}

Set arbitrarily two integers  $N,N' \geq 2$ and a real number  $x \geq 0$. The
analysis  of  $\Prob[B_{j,t}(a) \geq  x]$  is  exactly  the  same as  that  of
$\Prob[A_{j,t}(a) \geq  x]$, so we  present only the  latter.  The key  to the
analysis     is    a     so-called     stratification    argument     inspired
by~\cite{Cesa-Bianchi2008}.

For  every  $j  \in  \llbracket  J\rrbracket$  and  $t  \geq  1$,  recall  the
definitions  \eqref{eq:var}   and  \eqref{eq:tildevar}  of   $\var_{j,t}$  and
$\widetilde{\var}_{j,t}$.  On  the one  hand, by  \textbf{A\ref{assum:A3}} and
because   the   functions   of   a  real   variable   $u\mapsto   u^{2}$   and
$u \mapsto u^{\beta}$ are respectively convex and concave, it holds that
\begin{align}
  \notag
  \widetilde{\var}_{j,t}
  & \leq \frac{1}{t}                    \sum_{\tau=1}^t
    \Exp\left[\left(\Delta^{\circ}\bar{\ell}(\theta_{j, \tau-1})(\bar{O}_{\tau},
    \bar{Z}_{\tau})\right)^{2} \middle| \bar{Z}_{\tau}, F_{\tau-1}\right]\\
  \label{eq:A3:csq}
  &\leq \gamma \left(\frac{1}{t} \sum_{\tau=1}^{t}
    \Exp\left[\Delta^{\circ}\bar{\ell}(\theta_{j,     \tau-1})(\bar{O}_{\tau},
    \bar{Z}_{\tau})                 \middle|                 \bar{Z}_{\tau},
    F_{\tau-1}\right]\right)^{\beta}                 =                \gamma
    \left(\widetilde{H}_{j,t}\right)^{\beta}
\end{align}
almost       surely.       Moreover,       it       also      holds       that
$\widetilde{\var}_{j,t}      \leq      v_{2}$       almost      surely      by
Theorem~\ref{thm:Bernstein}.  The  previous upper bound  and \eqref{eq:A3:csq}
play a key role in the first  version of Step~2 (Step~2 (v1)) presented below.
On the other hand,  by \textbf{A\ref{assum:A1}}, \textbf{A\ref{assum:A3}}, and
because the function  $u \mapsto u^{\beta}$ is concave it  holds almost surely
that, for all $\tau \in \llbracket t\rrbracket$,
\begin{align*}
  \notag
  \var_{j,\tau}
  &\leq  \frac{1}{|\calA|} \sum_{\alpha  \in  \calA}
    \Exp\left[\left(\Delta^{\circ}\ell(\theta_{j,\tau-1})(O_{\alpha,\tau},
    Z_{\alpha,\tau})  \right)^{2} \middle|  Z_{\alpha,\tau},
    F_{\tau-1}\right] \\
  &\leq \gamma \left(\Exp\left[\Delta^{\circ}\bar{\ell}(\theta_{j,\tau-1})(\bar{O}_{\tau},
    \bar{Z}_{\tau}) \middle| \bar{Z}_{\tau}, F_{\tau-1}\right]\right)^{\beta}.
\end{align*}
Consequently if $\widetilde{H}_{j,t} \leq B$  (an inequality that holds almost
surely when $B=b_{1}$, by \textbf{A\ref{assum:A2}}), then it also holds that
\begin{equation*}
  B \geq \widetilde{H}_{j,t}
  = \frac{1}{t}\sum_{\tau=1}^{t}
  \Exp\left[\Delta^{\circ}\bar{\ell}(\theta_{j,\tau-1})(\bar{O}_{\tau},
    \bar{Z}_{\tau}) \middle| \bar{Z}_{\tau}, F_{\tau-1}\right] \\
  \geq                      \frac{1}{t}                     \sum_{\tau=1}^{t}
  (\var_{j,\tau}/\gamma)^{1/\beta}. 
\end{equation*}
In summary we will use that, for any $B > 0$,
\begin{equation}
  \label{eq:A3:csq:bis}
  \1\left\{\widetilde{H}_{j,t}   \leq   B\right\}  \leq   \1\left\{\max_{1\leq
      \tau\leq t} \{\var_{j,\tau}\}  \leq \gamma(tB)^{\beta}\right\} =
  \1\{\widetilde{\calF}_{\gamma(tB)^{\beta}}\}
\end{equation}
($\widetilde{\calF}_{V}$  is defined  for any  $V>0$ in  \eqref{eq:calF}). The
upper bound $\widetilde{H}_{j,t} \leq  b_{1}$ and \eqref{eq:A3:csq:bis} play a
key role in the second version of Step~2 (Step~2 (v2)) presented below.

\begin{description}
\item[Step   2    (v1).]    Set   $v_{2}^{(-1)}    :=   0$   and,    for   all
  $i \in  \llbracket N-1\rrbracket$, $v_{2}^{(i)} :=  2^{i+1-N} \times v_{2}$.
  In         view        of         \eqref{eq:A3:csq}        and         since
  $\widetilde{\var}_{j,t}  \in  \cup_{i=0}^{N}  [v_{2}^{(i-1)},  v_{2}^{(i)}]$
  almost surely, it holds that
  \begin{align}
    \notag
    \Prob\left[A_{j,t}(a) \geq x\right]
    & = \Prob\left[\widetilde{H}_{j,t}  -  \widehat{H}_{j,t} \geq  \frac{1}{1+a}
      \left(x + a \widetilde{H}_{j,t}\right)\right]\\
    \notag
    & \leq \Prob\left[\widetilde{H}_{j,t} - \widehat{H}_{j,t} \geq \frac{1}{1+a}\left(x
      + a (\widetilde{\var}_{j,t}/\gamma)^{1/\beta}\right)\right]\\
    \notag
    & \leq \sum_{i=0}^{N-1} \Prob\left[\widetilde{H}_{j,t} - \widehat{H}_{j,t} \geq
      \frac{1}{1+a}\left(x   +   a   (\widetilde{\var}_{j,t}/\gamma)^{1/\beta}
      \right), \widetilde{\var}_{j,t} \in 
      [v_{2}^{(i-1)}, v_{2}^{(i)}] \right ] \\
  \label{eq:step2a:start}
    & \leq \sum_{i=0}^{N-1} \Prob\left[\widetilde{H}_{j,t} - \widehat{H}_{j,t} \geq
      \frac{1}{1+a}  \left(  x  +  a  (v_{2}^{(i-1)}/\gamma)^{1/\beta}  \right),
      \widetilde{\var}_{j,t} \leq v_{2}^{(i)} \right].
  \end{align}
  Note  that  $(\widetilde{H}_{j,t}  -  \widehat{H}_{j,t})_{t \geq  1}$  is  a
  martingale           adapted           to           the           filtration
  $(\sigma(F_{t},        \sigma(\bar{Z}_{t+1})))_{t\geq        1}$.         By
  \textbf{A\ref{assum:A2}} and the  Fan-Grama-Liu concentration inequality for
  martingales~\cite[Theorem  3.10   in][]{Bercu2015},  \eqref{eq:step2a:start}
  implies
  \begin{equation}
    \label{eq:step2a}
    \Prob\left[A_{j,t}(a)   \geq   x\right]    \leq   \sum_{i=0}^{N-1}   \exp\left(
      -\frac{1}{2} \frac{t D_i(x)}{(1+a)^2} \right), 
  \end{equation}
  where, for all $i \in \llbracket N-1\rrbracket$,
  \begin{equation*}
    D_i(x)             :=             \frac{\left(x            +             a
        (v_{2}^{(i-1)}  /  \gamma)^{1/\beta}\right)^2}{v_{2}^{(i)}  +  \frac{1}{3}
      \frac{b_{2}}{1 + a} \left(x + a (v_{2}^{(i-1)} / \gamma)^{1/\beta}\right)}.
  \end{equation*}
  Set      arbitrarily     $i      \in     \llbracket N-1\rrbracket\cup\{0\}$      and     define
  $x_i := 3(1+a)v_{2}^{(i)}/b_{2} - a (v_{2}^{(i-1)}/\gamma)^{1/\beta}$.
  \begin{itemize}
  \item          If           $x          \leq           x_{i}$,          then
    $v_{2}^{(i)} \geq (x + a (v_{2}^{(i-1)}/\gamma)^{1/\beta})\times b_{2} /(3
    (1+a))$ hence
    \begin{align}
      \notag
      D_{i}(x)
      &  \geq \frac{\left(x  + a  (v_{2}^{(i-1)}/\gamma)^{1/\beta}\right)^2}{2
        v_{2}^{(i)}}
        = \frac{\left(x  + a  (v_{2}^{(i-1)}/\gamma)^{1/\beta}\right)^{2-\beta}}{2
        v_{2}^{(i)}/\left(x                         +                        a
        (v_{2}^{(i-1)}/\gamma)^{1/\beta}\right)^\beta}\\
      \label{eq:Di:lower:bound:a1}
      & \geq
        \frac{x^{2        -\beta}}{2         v_{2}^{(i)}/\left(x        +        a
        (v_{2}^{(i-1)}/\gamma)^{1/\beta}\right)^\beta}. 
    \end{align}
    If $i\neq 0$, then \eqref{eq:Di:lower:bound:a1} entails
    \begin{equation}
      \label{eq:Di:lower:bound:a2}
      D_{i}(x)                 \geq                 \frac{x^{2-\beta}}{2\gamma
        v_{2}^{(i)}/(a^{\beta}v_{2}^{(i-1)})}       =       \frac{x^{2       -
          \beta}}{4\gamma/a^\beta}. 
    \end{equation}
    If $i=0$,  then \eqref{eq:Di:lower:bound:a2}  is also met  if and  only if
    $x \geq  \underline{x}(a, N)$, where  $\underline{x}(a, N)$ is  defined in
    the theorem.
  \item     Moreover      if     $x     \geq      x_{i}$,     then
    $v_{2}^{(i)} \leq (x + a (v_{2}^{(i-1)}/\gamma)^{1/\beta})\times b_{2} /(3
    (1+a))$ hence
    \begin{equation}
      \label{eq:Di:lower:bound:a3}
      D_i(x)            \geq            \frac{\left(x           +            a
          (v_{2}^{(i-1)}/\gamma)^{1/\beta}\right)^2}{\frac{2}{3}\frac{b_{2}}{1+a}\left(x
          + a (v_{2}^{(i-1)}/\gamma)^{1/\beta}\right)} = \frac{x           +            a
        (v_{2}^{(i-1)}/\gamma)^{1/\beta}}{\frac{2}{3}\frac{b_{2}}{1+a}}
      \geq       \frac{x}{\frac{2}{3} \frac{b_{2}}{1+a}}. 
    \end{equation}
  \end{itemize}
  Therefore,  in  light  of  \eqref{eq:step2a},  \eqref{eq:Di:lower:bound:a2},
  \eqref{eq:Di:lower:bound:a3}  and the  definitions  of $C_{1}(a),  C_{2}(a)$
  given in the theorem, for all $x \geq \underline{x}(a, N)$, it holds that
  \begin{align}
    \notag
    \Prob[A_{j,t}(a) \geq x]
    &  \leq  \sum_{i=0}{N-1}  \left[\1\{x \leq  x_i\}  \exp  \left(-\frac{t\times
      (2x)^{2-\beta}}{C_{1}(a)}  \right)  +  \1\{x  \geq  x_i\}  \exp
      \left( -\frac{t\times (2x)}{C_{2}(a)} \right)\right] \\
    \label{eq:lower:bound:step2a}
    &\leq  N \left[\exp  \left(-\frac{t \times  (2x)^{2-\beta} }{C_{1}(a)}
      \right) + 
      \exp\left(- \frac{t \times (2x)}{C_{2}(a)}\right)\right]. 
  \end{align}
\item[Step~2  (v2).]   This  step  is   very  similar  to  Step~2  (v1).   Set
  $b_{1}^{(-1)}  :=  0$  and,  for  all  $i  \in  \llbracket  N'-1\rrbracket$,
  $b_{1}^{(i)} := 2^{i+1-N'} \times  b_{1}$.  In view of \eqref{eq:A3:csq:bis}
  and                                                                    since
  $\widetilde{H}_{j,t}  \in  \cup_{i=0}^{N'-1}  [b_{1}^{(i-1)},  b_{1}^{(i)}]$
  almost surely, it holds that
  \begin{align}
    \notag
    \Prob\left[A_{j,t}(a) \geq x\right]
    & = \Prob\left[\widetilde{H}_{j,t}  -  \widehat{H}_{j,t} \geq  \frac{1}{1+a}
      \left(x + a \widetilde{H}_{j,t}\right)\right]\\
    \notag
    & \leq \sum_{i=0}^{N'-1} \Prob\left[\widetilde{H}_{j,t} - \widehat{H}_{j,t} \geq
      \frac{1}{1+a}\left(x +  a \widetilde{H}_{j,t} \right),  \widetilde{H}_{j,t} \in
      [b_{1}^{(i-1)}, b_{1}^{(i)}] \right ] \\
    \notag
    & \leq \sum_{i=0}^{N'-1} \Prob\left[\widetilde{H}_{j,t} - \widehat{H}_{j,t} \geq
      \frac{1}{1+a}  \left(  x  +  a  b_{1}^{(i-1)}  \right),
      \widetilde{H}_{j,t} \leq b_{1}^{(i)} \right]\\
    \label{eq:step2b:start}
    & \leq \sum_{i=0}^{N'-1} \Prob\left[\widetilde{H}_{j,t} - \widehat{H}_{j,t} \geq
      \frac{1}{1+a}  \left(  x  +  a  b_{1}^{(i-1)}  \right),
      \widetilde{\calF}_{\gamma(tb_{1}^{(i)})^{\beta}} \right].
  \end{align}
  By       \textbf{A\ref{assum:A2}}        and       \textbf{A\ref{assum:A3}},
  Theorem~\ref{thm:Bernstein} applies and \eqref{eq:step2b:start} yields
  \begin{equation}
    \label{eq:step2b}
    \Prob\left[A_{j,t}(a)   \geq   x\right]    \leq   \sum_{i=0}^{N'-1}   \exp\left(
      2-\frac{|\calA|/\deg(\calG)}{(1+a)^2}D_i'(x) \right), 
  \end{equation}
  where, for all $i \in \llbracket N'-1\rrbracket$,
  \begin{equation*}
    D_i'(x)             :=             \frac{\left(x            +             a
        b_{1}^{(i-1)}\right)^2}{32e^{2} \gamma(tb_{1}^{(i)})^{\beta} + 
      \frac{15eb_{2}}{1 + a} \left(x + a b_{1}^{(i-1)}\right)}.
  \end{equation*}
  Set     arbitrarily      $i     \in     \llbracket N'-1\rrbracket\cup\{0\}$      and     define
  $x_{i}'    :=    32e(1+a)\gamma   (tb_{1}^{(i)})^{\beta}/(15b_{2})    -    a
  b_{1}^{(i-1)}$.
  \begin{itemize}
  \item          If          $x           \leq          x_{i}'$,          then
    $32e^{2}\gamma(tb_{1}^{(i)})^{\beta}  \geq  (x  +  a  b_{1}^{(i-1)})\times
    15eb_{2} /(1+a)$ hence
    \begin{align}
      \notag
      D_{i}'(x)
      &     \geq     \frac{\left(x      +     a     b_{1}^{(i-1)}\right)^2}{64
        e^{2}\gamma(tb_{1}^{(i)})^{\beta}} 
        = \frac{\left(x  + a  b_{1}^{(i-1)}\right)^{2-\beta}}{64
        e^{2}\gamma(tb_{1}^{(i)})^{\beta}/\left(x + a 
        b_{1}^{(i-1)}\right)^\beta}\\
      \label{eq:Di:lower:bound:b1}
      & \geq
        \frac{x^{2        -\beta}}{64
        e^{2}\gamma(tb_{1}^{(i)})^{\beta}/\left(x + a 
        b_{1}^{(i-1)}\right)^\beta}. 
    \end{align}
    If $i\neq 0$, then \eqref{eq:Di:lower:bound:b1} entails
    \begin{equation}
      \label{eq:Di:lower:bound:b2}
      D_{i}'(x)                 \geq                 \frac{x^{2-\beta}}{64
        e^{2}\gamma(tb_{1}^{(i)})^{\beta}/(a 
        b_{1}^{(i-1)})^\beta}       =       \frac{x^{2       -
          \beta}}{64e^{2}\gamma(2t/a)^{\beta}}. 
    \end{equation}
    If $i=0$,  then \eqref{eq:Di:lower:bound:b2}  is also met  if and  only if
    $x \geq  \underline{x}'(a, N')$, where $\underline{x}'(a,  N')$ is defined
    in the theorem.
  \item       Moreover       if        $x       \geq       x_{i}'$,       then
    $32e^{2}\gamma(tb_{1}^{(i)})^{\beta}  \leq  (x  +  a  b_{1}^{(i-1)})\times
    15eb_{2} /(1+a)$ hence
    \begin{equation}
      \label{eq:Di:lower:bound:b3}
      D_i'(x)            \geq            \frac{\left(x           +            a
          b_{1}^{(i-1)}\right)^2}{\frac{30eb_{2}}{1   +   a}   \left(x   +   a
          b_{1}^{(i-1)}\right)} = \frac{x + a 
        b_{1}^{(i-1)}}{\frac{30eb_{2}}{1   +   a}}
      \geq       \frac{x}{\frac{30eb_{2}}{1   +   a}}. 
    \end{equation}
  \end{itemize}
  Therefore,  in  light  of  \eqref{eq:step2b},  \eqref{eq:Di:lower:bound:b2},
  \eqref{eq:Di:lower:bound:b3} and  the definitions of  $C_{1}'(a), C_{2}'(a)$
  given in the theorem, for all $x \geq \underline{x}'(a, N')$, it holds that
  \begin{align}
    \notag
    \Prob[A_{j,t}(a) \geq x]
    &     \leq     \sum_{i=0}^{N'-1}     \Bigg[1\{x    \leq     x_i'\}     \exp
      \left(2-\frac{[|\calA|/( t^{\beta}\deg(\calG))]\times 
      (2x)^{2-\beta}}{C_{1}'(a)} \right) \\
    \notag
    &\qquad\qquad + \1\{x \geq x_i'\} \exp
      \left(2 -\frac{[|\calA|/\deg(\calG)]\times (2x)}{C_{2}'(a)} \right)\Bigg] \\
    \notag
    &\leq  N'   \Bigg[ \exp
      \left(2-\frac{[|\calA|/( t^{\beta}\deg(\calG))]\times 
      (2x)^{2-\beta}}{C_{1}'(a)} \right)\\
    \label{eq:lower:bound:step2b}
    &\qquad \qquad \qquad + \exp
      \left(2 -\frac{[|\calA|/\deg(\calG)]\times (2x)}{C_{2}'(a)} \right)\Bigg]. 
  \end{align}
\end{description}

\paragraph{Step 3: end of the proof.}

In view of \eqref{eq:SL_decomposition}, a union bound implies that
\begin{align*}
  \Prob
  \left[     \widetilde{H}_{\widehat{j}_{t},t}     -    (1     +     2a)
  \widetilde{H}_{\widetilde{j}_{t}, t} \geq x \right] 
  &  \leq \Prob  \left[ \max_{j  \in \llbracket J\rrbracket}  \{A_{j,t}(a)\} +  \max_{j \in  \llbracket J\rrbracket}
    \{B_{j,t}(a)\} \geq x \right] \\ 
  &  \leq \sum_{j=1}^{J}  \left(\Prob \left[A_{j,t}(a)  \geq x/2  \right] +  P
    \left[B_{j,t}(a) \geq x/2 \right]\right).
\end{align*}
For all  $x \geq  \underline{x}(a,N)$, \eqref{eq:thm:main:one:a}  follows from
\eqref{eq:lower:bound:step2a}    and   the    above   inequality;    for   all
$x   \geq   \underline{x}'(a,N')$,  \eqref{eq:thm:main:one:b}   follows   from
\eqref{eq:lower:bound:step2b} and  the above  inequality.  This  completes the
proof of Theorem~\ref{thm:main:one}.~\hfill$\square$

\subsection{Proof of Corollary~\ref{cor:main}}

Corollary~\ref{cor:main}  follows  from the  straightforward  application,
twice, of the next technical  lemma, based on \eqref{eq:thm:main:one:a} on the
one hand and on \eqref{eq:thm:main:one:b} on the other hand.

\begin{lemma}
  \label{lem:tech:lemma}
  Let     $a,b,c>0$,     $\beta\in]0,1]$     be     some     constants     and
  $(\underline{x}(N))_{N\geq  2}$  be  a  sequence of  positive  numbers  that
  decreases  to  0.  Let  $U$  be  a real  valued  random  variable such  that
  $E[|U|]   <  \infty$   and,   for   all  integer   $N   \geq   2$  and   all
  $x \geq \underline{x}(N) > 0$,
  \begin{equation}
    \label{eq:tech:lemma:hyp}
    \Prob[U \geq  x] \leq a N  \left[\exp(-x^{2-\beta}/b) +
      \exp(-x/c)\right].
  \end{equation}
  If
  $N \geq  \min\{n \geq  2 :  \underline{x}(n) \leq  b^{1/(2-\beta)}, \log(an)
  \geq 1\}$, then
  \begin{equation}
    \label{eq:tech:lemma:res}
    \Exp[U] \leq 3 (b\log(aN))^{1/(2-\beta)} + 2c \log(aN).
  \end{equation}
\end{lemma}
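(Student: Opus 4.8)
The plan is to bound $\Exp[U]$ via the layer-cake identity $\Exp[U] \le \Exp[\max(U,0)] = \int_0^\infty \Prob[U \ge x]\,dx$, which is legitimate precisely because $\Exp[|U|] < \infty$ guarantees that the negative part of $U$ contributes a non-positive (finite) amount. Write $u := (b\log(aN))^{1/(2-\beta)}$ and $v := c\log(aN)$, so that the target bound~\eqref{eq:tech:lemma:res} reads $\Exp[U] \le 3u + 2v$, and split the integral at the threshold $x_0 := u + v$. On $[0,x_0]$ I would simply bound $\Prob[U\ge x] \le 1$, contributing at most $x_0 = u+v$. On the tail $[x_0,\infty)$ I would invoke the hypothesis~\eqref{eq:tech:lemma:hyp}; this is permitted because the standing conditions on $N$ give $\underline{x}(N) \le b^{1/(2-\beta)} \le u \le x_0$ (using $\log(aN) \ge 1$ and that $\underline{x}(\cdot)$ is decreasing), so every $x \ge x_0$ satisfies $x \ge \underline{x}(N)$.

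Next I would estimate the two tail integrals separately. The sub-exponential piece is immediate: $\int_{x_0}^\infty aN e^{-x/c}\,dx = aNc\,e^{-x_0/c} \le aNc\,e^{-v/c} = c$, since $v/c = \log(aN)$ and $e^{-u/c}\le 1$. The heavier-tailed piece needs one small trick to handle the degeneracy of the exponent as $\beta \to 1$: because $2-\beta \ge 1$ and $1-\beta \ge 0$, for every $x \ge u$ one has $x^{2-\beta} = x\cdot x^{1-\beta} \ge x\,u^{1-\beta}$, whence $\int_{x_0}^\infty aN e^{-x^{2-\beta}/b}\,dx \le aN\tfrac{b}{u^{1-\beta}}\,e^{-x_0 u^{1-\beta}/b} \le aN\tfrac{b}{u^{1-\beta}}\,e^{-u^{2-\beta}/b} = \tfrac{b}{u^{1-\beta}}$, using $u^{2-\beta} = b\log(aN)$. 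Finally $\tfrac{b}{u^{1-\beta}} = b^{1/(2-\beta)}(\log(aN))^{-(1-\beta)/(2-\beta)} \le b^{1/(2-\beta)} \le u$, again because $\log(aN)\ge 1$.

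Collecting the three contributions gives $\Exp[U] \le (u+v) + u + c = 2u + v + c$; since $\log(aN)\ge 1$ forces $c \le c\log(aN) = v$, this yields $\Exp[U] \le 2u + 2v \le 3u + 2v$, which is~\eqref{eq:tech:lemma:res}. There is no genuine obstacle in this argument: the only points requiring a little care are the linearization $x^{2-\beta} \ge x\,u^{1-\beta}$ on $[u,\infty)$ (needed because the ``Gaussian-type'' tail degenerates toward a plain exponential when $\beta$ is near $1$) and the bookkeeping that makes the integration threshold $x_0$ dominate $\underline{x}(N)$, so that the hypothesis~\eqref{eq:tech:lemma:hyp} is applicable throughout the tail.
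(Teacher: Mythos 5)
Your proof is correct. The overall strategy is the same as the paper's — the layer-cake identity $\Exp[U] \le \int_0^\infty \Prob[U \ge x]\,dx$ followed by splitting the range of integration at a threshold beyond which the hypothesis applies — but the tactics differ in a worthwhile way. The paper splits the two tail integrals at two separate ``natural'' thresholds, $x_{\LL} := (b\log(aN))^{1/(2-\beta)}$ and $x_{\RR} := c\log(aN)$, adds a third term $\underline{x}(N)$ for the region where the hypothesis is silent, and controls the sub-exponential piece $\int_{x_{\LL}}^\infty e^{-x^{2-\beta}/b}\,dx$ via the change of variable $u = x^{2-\beta}/b$ together with the fact that $u \mapsto u^{1/(2-\beta)-1}$ is nonincreasing, turning it into a truncated gamma-like integral. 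You instead use a single threshold $x_0 = u + v$ (which automatically dominates $\underline{x}(N)$, so the hypothesis applies everywhere to the right), and you handle the heavy tail with the pointwise linearization $x^{2-\beta} \ge x\,u^{1-\beta}$ for $x \ge u$, which converts it directly into a plain exponential integral with no change of variable. Both arguments crucially exploit $\log(aN) \ge 1$ for the final bookkeeping. Your variant is a bit more elementary, and it in fact yields the slightly tighter bound $\Exp[U] \le 2u + 2v \le 3u + 2v$, which of course still establishes~\eqref{eq:tech:lemma:res}. The one step worth flagging explicitly in a write-up is that your hypothesis-applicability check needs the monotonicity of $\underline{x}(\cdot)$ \emph{and} the fact that the minimizing $n$ satisfies both displayed conditions simultaneously, so that both $\underline{x}(N)\le b^{1/(2-\beta)}$ and $\log(aN)\ge 1$ hold at the chosen $N$; you do state this, so there is no gap.
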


\begin{proof}[Proof of Lemma~\ref{lem:tech:lemma}]
  It is well known that
  \begin{equation*}
    \Exp[U] \leq  \Exp[U\1\{U\geq 0\}]  = \int_{0}^{\infty}  \Prob[U \1\{U\geq
    0\} \geq x] dx = \int_{0}^{\infty}     \Prob[U \geq x] dx. 
  \end{equation*}
  Therefore, for any $N \geq 2$,
  \begin{align}
    \notag
    \Exp[U]
    & \leq \int_{0}^{\infty} \left(\1\{x  < \underline{x}(N)\} + \1\{x \geq
      \underline{x}(N)\} a N  \left[\exp(-x^{2-\beta}/b) +
      \exp(-x/c)\right]\right) dx\\
    \label{eq:Exp:U}
    & \leq  \underline{x}(N) +  \int_{0}^{\infty} \min\{1,  a N
      \exp(-x^{2-\beta}/b)\} dx + \int_{0}^{\infty} \min\{1,  a N
      \exp(-x/c)\} dx.
  \end{align}
  Let us  denote by $\LL$  and $\RR$ the  above left-hand side  and right-hand
  side                            integrals.                            Choose
  $N \geq  \min\{n \geq  2 :  \underline{x}(n) \leq  b^{1/(2-\beta)}, \log(an)
  \geq 1\}$.
  \begin{description}
  \item[Bounding    $\LL$.]     Let    $x_{\LL}$    be    chosen    so    that
    $a    N    \exp(-x_{\LL}^{2-\beta}    /   b)    =    1$,    \textit{i.e.},
    $x_{\LL}  :=  (b\log(aN))^{1/(2-\beta)}$. Now,  thanks  to  the change  of
    variable $u =  x^{2-\beta}/b$ and because $u \mapsto  u^{1/(2-\beta) - 1}$
    is nonincreasing,
    \begin{align}
      \notag
      \LL
      &= x_{\LL} + aN \int_{x_{\LL}}^{\infty} \exp ( -x^{2-\beta}/b) dx \\
      \notag
      &= x_{\LL} + aN b^{1/(2-\beta)} \int_{\log(aN)}^{\infty} \exp(-u) u^{1/(2-\beta)-1} du \\
      \notag
      &\leq     x_{\LL}+     \frac{aN     (b\log(aN))^{1/(2-\beta)}}{\log(aN)}
        \int_{0}^{\infty} \exp(-u) du \\
      \label{eq:LL}
      &= x_{\LL} (1 + 1/\log(aN)) \leq 2 (b\log(aN))^{1/(2-\beta)}.
    \end{align}
  \item[Bounding    $\RR$.]     Let    $x_{\RR}$    be    chosen    so    that
    $aN \exp(-x_{\RR} / c) =1$, \textit{i.e.},  $x_{\RR} := c \log(aN)$. It is
    readily seen that
    \begin{equation}
      \label{eq:RR}
      \RR
      = x_{\RR} + aN \int_{x_{\RR}}^{\infty} \exp(-x/c) dx 
      = x_{\RR} + acN \exp(-x_{\RR}/c) = c (1 + \log(aN)). 
    \end{equation}
  \end{description}
  In view of \eqref{eq:Exp:U}, \eqref{eq:LL},  \eqref{eq:RR}, and by choice of
  $N$, we obtain
  \begin{align*}
    \Exp[U]
    & \leq b^{1/(2-\beta)} + 2 (b\log(aN))^{1/(2-\beta)} + c (1 + \log(aN))\\
    & \leq 3 (b\log(aN))^{1/(2-\beta)} + 2c \log(aN).
  \end{align*}
  This completes the proof.
\end{proof}

Set  $t \geq  1$  and  $a \in]0,1]$.   In  view of  \eqref{eq:thm:main:one:a},
Lemma~\ref{lem:tech:lemma}  yields   \eqref{eq:cor:a}  under   the  sufficient
condition that $N\geq 2$ also satisfy \eqref{eq:cond:a}.  Moreover, in view of
\eqref{eq:thm:main:one:b},     Lemma~\ref{lem:tech:lemma}      also     yields
\eqref{eq:cor:b} under  the sufficient condition  that $N\geq 2$  also satisfy
\eqref{eq:cond:b}.  This completes the proof of the corollary.~\hfill$\square$

\subsection{Proof of Theorem~\ref{thm:Bernstein}}
\label{sec:Bernstein}

The   proof  of   Theorem~\ref{thm:Bernstein}  hinges   on  a   Bernstein-like
concentration inequality for  sums of partly dependent  random variables shown
by~\citet[][Theorem~2.3]{Janson04}.  Janson  emphasizes that his  theorem uses
the      independence      of       suitable      (large)      subsets      of
$(\zeta_{\alpha})_{\alpha  \in  \calA}$,  not  any other  information  on  the
dependencies, so that the result must be suboptimal when the dependencies that
exist are weak.  We recall the theorem for completeness.

\begin{theorem}[\citet{Janson04}]
  \label{thm:Janson}
  Let  $(\zeta_{\alpha})_{\alpha  \in  \calA}$   be  a  collection  of  random
  variables      with     dependency      graph     $\calG$      such     that
  $\zeta_{\alpha} -  \Exp[\zeta_{\alpha}] \leq  B$ for  some $B  > 0$  and all
  $\alpha                 \in                  \calA$.                  Define
  $\Zeta:=   |\calA|^{-1}\sum_{\alpha    \in   \calA}    \zeta_{\alpha}$   and
  $V := |\calA|^{-1} \sum_{\alpha \in \calA} \Var[\zeta_{\alpha}]$.  Then, for
  all $x \geq 0$,
  \begin{equation}
    \label{eq:Janson}
    \Prob\left[\Zeta      -      \Exp[\Zeta]      \geq      x\right]      \leq
    \exp\left(-\frac{|\calA|V}{B^{2}                              \deg(\calG)}
      h\left(\frac{4Bx}{5V}\right)\right), 
  \end{equation}
  where $h:u \mapsto (1+u) \log(1+u) - u$.
\end{theorem}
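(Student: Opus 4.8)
The plan is to treat the statement in two halves --- the almost-sure bound $\widetilde{\var}_{j,t} \le v_{2}$ and the exponential deviation inequality~\eqref{eq:thm:Bernstein} --- both of which I would obtain by applying Janson's inequality (Theorem~\ref{thm:Janson}) \emph{conditionally} on $(\bar{Z}_{\tau}, F_{\tau-1})$. Fix $j$ and set $U_{\alpha,\tau} := \Delta^{\circ}\ell(\theta_{j,\tau-1})(O_{\alpha,\tau}, Z_{\alpha,\tau})$, so that $\Delta^{\circ}\bar{\ell}(\theta_{j,\tau-1})(\bar{O}_{\tau}, \bar{Z}_{\tau}) = |\calA|^{-1}\sum_{\alpha \in \calA}U_{\alpha,\tau}$. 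By Assumption~\textbf{A\ref{assum:A0}}, conditionally on $(\bar{Z}_{\tau}, F_{\tau-1})$ the family $(U_{\alpha,\tau})_{\alpha\in\calA}$ admits $\calG$ as a dependency graph, and by the second part of~\textbf{A\ref{assum:A2}} the variable $U_{\alpha,\tau} - \Exp[U_{\alpha,\tau}\mid Z_{\alpha,\tau}, F_{\tau-1}]$ is bounded by $b_{2}$ in absolute value almost surely. Writing $R := |\calA|/\deg(\calG)$ and $\nu_{\tau} := |\calA|^{-1}\sum_{\alpha}\Var[U_{\alpha,\tau}\mid \bar{Z}_{\tau}, F_{\tau-1}]$, Theorem~\ref{thm:Janson} then controls both tails of $|\calA|^{-1}\sum_{\alpha}U_{\alpha,\tau}$ about its $(\bar{Z}_{\tau}, F_{\tau-1})$-conditional mean, with $B = b_{2}$ and variance parameter $\nu_{\tau}$.

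For the bound $\widetilde{\var}_{j,t} \le v_{2}$, I would first use the moment identities of Assumption~\textbf{A\ref{assum:A1}} to rewrite the $(\bar{Z}_{\tau}, F_{\tau-1})$-conditional second moments entering $\nu_{\tau}$ as $(Z_{\alpha,\tau}, F_{\tau-1})$-conditional ones, and bound those by $v_{1}$ using Assumption~\textbf{A\ref{assum:A4}} (which holds under~\textbf{A\ref{assum:A2}}), so that $\nu_{\tau} \le v_{1}$ almost surely. Then I would massage Janson's Cramér rate $h$ into a Bernstein form through $h(u) \ge u^{2}/(2(1+u/3))$, obtaining a conditional tail bound of sub-Gaussian-then-sub-exponential type, and integrate it: since $\Var[\Delta^{\circ}\bar{\ell}(\theta_{j,\tau-1})(\bar{O}_{\tau}, \bar{Z}_{\tau})\mid \bar{Z}_{\tau}, F_{\tau-1}] = \int_{0}^{\infty} 2x\,\Prob[\,|\Delta^{\circ}\bar{\ell}(\theta_{j,\tau-1})(\bar{O}_{\tau}, \bar{Z}_{\tau}) - \Exp[\cdot \mid \bar{Z}_{\tau}, F_{\tau-1}]| \ge x \mid \bar{Z}_{\tau}, F_{\tau-1}]\,dx$, splitting the integral at the crossover between the two regimes gives a bound of the shape $\mathrm{const}\cdot(b_{2}/R)^{2} + \mathrm{const}\cdot v_{1}/R$; carrying the constants through yields the summand bound $\tfrac{3\pi}{2}\big[(15b_{2}/R)^{2} + 64v_{1}/R\big] = v_{2}$, and averaging over $\tau \in \llbracket t\rrbracket$ gives $\widetilde{\var}_{j,t} \le v_{2}$ almost surely.

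For the deviation bound, I would write $[\widehat{R}_{j,t} - \widehat{R}_{t}(\theta^{\circ})] - [\widetilde{R}_{j,t} - \widetilde{R}_{t}(\theta^{\circ})] = t^{-1}\sum_{\tau=1}^{t}\xi_{\tau}$, where $\xi_{\tau} := \Delta^{\circ}\bar{\ell}(\theta_{j,\tau-1})(\bar{O}_{\tau}, \bar{Z}_{\tau}) - \Exp[\Delta^{\circ}\bar{\ell}(\theta_{j,\tau-1})(\bar{O}_{\tau}, \bar{Z}_{\tau})\mid \bar{Z}_{\tau}, F_{\tau-1}]$ is a martingale difference sequence for $(\sigma(F_{\tau},\sigma(\bar{Z}_{\tau+1})))_{\tau}$, and work on $\widetilde{\calF}_{V}$, where (via~\textbf{A\ref{assum:A1}} again) $\nu_{\tau} \le V$ for all $\tau \le t$. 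Then I would: (i) turn the conditional Janson bound into a conditional Rosenthal inequality --- following \citeauthor{Petrov95}'s passage from tail bounds to moments --- showing that for every integer $p \ge 2$, $\Exp[|\xi_{\tau}|^{p}\mid \bar{Z}_{\tau}, F_{\tau-1}]$ is at most an explicit $p$-dependent factor times $\big\{(\nu_{\tau}/R)^{p/2} + p^{p}(b_{2}/R)^{p}\big\}$, the $1/R$ scaling reflecting that through Janson the $|\calA|$ summands act like $R$ independent ones; (ii) aggregate these across $\tau$ by a convexity argument to control $\Exp\big[|\sum_{\tau=1}^{t}\xi_{\tau}|^{p}\1\{\widetilde{\calF}_{V}\}\big]$ --- this is the step the Comments single out as responsible for the $t^{\beta}$ in~\eqref{eq:cor:b}; and (iii) convert the resulting family of moment bounds into~\eqref{eq:thm:Bernstein} by Markov's inequality and optimization over $p$, exactly as in~\citep[Corollary~3(b)]{Dedecker01} (after~\citep[Theorem~6]{DLP84}), the choice of $p$ near $Rx^{2}/(32e^{2}V + 15eb_{2}x)$ producing precisely that exponent, up to the spurious additive ``$2$'' that is the usual by-product of such $p$-optimization.

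The hard part is steps (i)--(ii): extracting from Janson's \emph{single-collection} inequality a Rosenthal inequality with a usable dependence on $p$, and then summing it over the $t$ time steps without losing more than a factor $t^{\beta}$ --- as the Comments observe, even a sharp Marcinkiewicz--Zygmund inequality in place of the convexity step would not remove this loss. A more technical, secondary point is the reduction $\nu_{\tau} \le v_{1}$ (and $\nu_{\tau} \le V$ on $\widetilde{\calF}_{V}$), which must be routed through the moment identities of~\textbf{A\ref{assum:A1}}, since there is no usable conditional independence of $O_{\alpha,\tau}$ from the remaining coordinates of $\bar{Z}_{\tau}$.
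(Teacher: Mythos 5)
The statement you were asked to prove is Theorem~\ref{thm:Janson} itself --- Janson's concentration inequality for a single collection $(\zeta_{\alpha})_{\alpha\in\calA}$ of partly dependent random variables --- which the paper does not prove but imports verbatim from \citet[Theorem~2.3]{Janson04} (``we recall the theorem for completeness''). Your proposal instead takes Theorem~\ref{thm:Janson} as given and sketches a derivation of Theorem~\ref{thm:Bernstein} from it: the two ``halves'' you identify (the almost-sure bound $\widetilde{\var}_{j,t}\le v_{2}$ and inequality \eqref{eq:thm:Bernstein}) are exactly the two claims of Theorem~\ref{thm:Bernstein}, and your steps (i)--(iii) (conditional Janson, then Rosenthal-type moments following \citeauthor{Petrov95}, then convexity over $\tau$, then Markov plus optimization in $p$ as in \citet{Dedecker01}) track the paper's Section~\ref{sec:Bernstein} quite faithfully. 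But as an answer to the question actually posed, the proposal is circular: it presupposes the very inequality \eqref{eq:Janson} it is supposed to establish, and nothing in it bounds $\Prob[\Zeta-\Exp[\Zeta]\geq x]$ for a general collection with dependency graph $\calG$.

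A proof of \eqref{eq:Janson} requires Janson's own argument, which is of a different nature from anything in your sketch: partition $\calA$ into at most $\deg(\calG)$ independent sets of $\calG$ via a proper coloring (or, more finely, use a fractional cover, which is where the fractional chromatic number mentioned in the paper's footnote enters); express the moment generating function of $\sum_{\alpha}\zeta_{\alpha}$ through the sums over these independent sets and apply H\"older's inequality so that only within-set independence is ever used; bound each within-set moment generating function by the product of the individual ones, invoking the one-sided bound $\zeta_{\alpha}-\Exp[\zeta_{\alpha}]\leq B$ to obtain a Bennett-type exponent involving $h$; finally optimize the Chernoff parameter, which is where the constant $4/5$ inside $h$ comes from. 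None of these ingredients appears in your proposal, so there is a genuine gap: the target statement is left unproved.
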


Note  that  \eqref{eq:thm:Bernstein}   from  Theorem~\ref{thm:Bernstein}  also
writes as
\begin{equation*}
  \label{eq:thm:Bernstein:alt}
  \Prob\left[|\widehat{H}_{j,t}     -     \widetilde{H}_{j,t}|     \geq     x,
    \widetilde{\calF}_{V}\right]                                          \leq
  \exp\left(2-\frac{[|\calA|/\deg(\calG)]x^{2}}{32e^{2}V + 15eb_{2}x}\right). 
\end{equation*}
Following  the  line  of  proof of  the  Rosenthal  inequality  by~\citet[page
59]{Petrov95} (see also the proof of  Theorem 5.2 in \citep{Baraud00}), we use
\eqref{eq:Janson}   to  control   $\Exp[|\Zeta   -  \Exp[\Zeta]|^{p}]$   hence
$\Exp[|\widehat{H}_{j,t} -  \widetilde{H}_{j,t}|^{p}]$ (by convexity)  for all
$p \geq 2$.  The bound~\eqref{eq:thm:Bernstein} follows as in~\cite[][proof of
Corollary  3(b)]{Dedecker01}, a  method  inspired by  the  proof of  Theorem~6
in~\citep{DLP84}.

We first prove this corollary  of Theorem~\ref{thm:Janson}.  The constants are
in no way optimal.

\begin{corollary}
  \label{cor:Janson}
  In the context of Theorem~\ref{thm:Janson}, for all $p\geq 2$,
  \begin{equation}
    \label{eq:moments}
    \Exp\left[|\Zeta    -    \Exp[\Zeta]|^{p}\right]    \leq    \frac{3\pi}{2}
    \left[\left(\frac{15B\deg(\calG)}{2|\calA|}\right)^{p}       p^{p}       +
      \left(\frac{32V\deg(\calG)}{|\calA|}\right)^{p/2}       p^{p/2}\right].  
  \end{equation}
\end{corollary}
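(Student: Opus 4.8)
The plan is to recover the moment bound by integrating the tail bound of Theorem~\ref{thm:Janson}: since $\Exp[|\Zeta - \Exp[\Zeta]|^{p}] = \int_{0}^{\infty} p x^{p-1}\Prob[|\Zeta - \Exp[\Zeta]| \geq x]\,dx$, it is enough to control the tail and then carry out an elementary computation. First I would apply Theorem~\ref{thm:Janson} to $(\zeta_{\alpha})_{\alpha \in \calA}$ and, to capture the lower tail, to $(-\zeta_{\alpha})_{\alpha \in \calA}$ as well (the dependency graph is unchanged; the one-sided boundedness of $-\zeta_{\alpha}$ is available in every setting where this corollary is invoked, since there the $\zeta_{\alpha}$ are two-sided bounded --- e.g. centered and $b_{2}$-bounded by Assumption~\textbf{A\ref{assum:A2}} in the proof of Theorem~\ref{thm:Bernstein}). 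This gives, for all $x \geq 0$, $\Prob[|\Zeta - \Exp[\Zeta]| \geq x] \leq 2\exp\!\big(-\tfrac{|\calA|V}{B^{2}\deg(\calG)} h(\tfrac{4Bx}{5V})\big)$, with $h(u) = (1+u)\log(1+u)-u$.

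Next I would turn this Bennett-type bound into a sub-Gaussian-plus-sub-exponential bound. Using the classical elementary inequality $h(u) \geq \tfrac{u^{2}/2}{1+u/3}$ (valid for all $u \geq 0$), the exponent is at least $\tfrac{24|\calA|x^{2}}{\deg(\calG)(75V+20Bx)}$; splitting according to whether $75V$ or $20Bx$ dominates the denominator and keeping, a fortiori, the sum of the two resulting exponentials for every $x$, I obtain $\Prob[|\Zeta - \Exp[\Zeta]| \geq x] \leq 2\exp\!\big(-\tfrac{4|\calA|x^{2}}{25\deg(\calG)V}\big) + 2\exp\!\big(-\tfrac{3|\calA|x}{5\deg(\calG)B}\big)$.

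Third, I would integrate term by term, using $\int_{0}^{\infty} x^{p-1}e^{-x^{2}/\mu}\,dx = \tfrac12\mu^{p/2}\Gamma(p/2)$ and $\int_{0}^{\infty} x^{p-1}e^{-x/\lambda}\,dx = \lambda^{p}\Gamma(p)$ with $\mu := 25\deg(\calG)V/(4|\calA|)$ and $\lambda := 5\deg(\calG)B/(3|\calA|)$. This yields $\Exp[|\Zeta - \Exp[\Zeta]|^{p}] \leq p\mu^{p/2}\Gamma(p/2) + 2p\lambda^{p}\Gamma(p) = 2\Gamma(p/2+1)\mu^{p/2} + 2\Gamma(p+1)\lambda^{p}$. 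It then remains to bound the Gamma factors by Stirling-type estimates ($\Gamma(z+1) \leq \sqrt{2\pi z}\,(z/e)^{z}e^{1/(12z)}$, together with $(z/e)^{z} \leq z^{z}$), to note that the prefactors $(25/4)^{p/2}$ and $(5/3)^{p}$ are dominated by $32^{p/2}$ and $(15/2)^{p}$ respectively, and to absorb every numerical constant that comes out into the factor $3\pi/2$, thereby recovering exactly \eqref{eq:moments}.

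The bulk of the work --- the third step --- is routine Gamma-function and Stirling bookkeeping, and since the target constants $15/2$, $32$ and $3\pi/2$ are declared far from optimal, there is ample slack to make it close. The only point genuinely requiring care is the very first step: Theorem~\ref{thm:Janson} as stated controls only the upper tail, so one must either verify, as above, that it legitimately applies to $-\zeta$ in every use of the corollary, or strengthen the hypothesis to $|\zeta_{\alpha} - \Exp[\zeta_{\alpha}]| \leq B$; everything downstream then goes through mechanically.
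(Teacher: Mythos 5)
Your proposal is correct but takes a genuinely different route from the paper. The paper derives Corollary~\ref{cor:Janson} as a Rosenthal-type moment inequality via a Petrov-style truncation argument: it truncates each centered summand at level $m=s/r$, splits the moment into a ``truncation failure'' term bounded by $(rB)^p$ and an integral over the tail of the truncated sum, applies Janson's bound to the (automatically two-sided-bounded) truncated summands with the lower bound $h(u)\geq \tfrac{u}{2}\log(1+u)$, performs a change of variable, and finally chooses the truncation parameter $r=5(p+1)\deg(\calG)/|\calA|$ to make the resulting integral converge to $\pi/2$. You instead integrate the Janson tail directly, first converting the Bennett function to a Bernstein bound via $h(u)\geq \tfrac{u^2/2}{1+u/3}$, then splitting into sub-Gaussian and sub-exponential pieces and integrating against Gamma functions. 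Your route is shorter and more transparent; indeed, since $\Gamma(q+1)\leq q^q$ for $q\geq 1$, the final bookkeeping requires no Stirling estimate at all: $2\Gamma(p/2+1)\mu^{p/2}\leq 2(25/8)^{p/2}(V\deg(\calG)/|\calA|)^{p/2}p^{p/2}$ and $2\Gamma(p+1)\lambda^{p}\leq 2(5/3)^{p}(B\deg(\calG)/|\calA|)^{p}p^{p}$, which are absorbed into the stated $32$, $15/2$ and $3\pi/2$ with room to spare. The one genuine concern you raise --- that Theorem~\ref{thm:Janson} as stated only controls the upper tail, so a two-sided bound on $\zeta_{\alpha}-\Exp[\zeta_{\alpha}]$ must be available to treat $|\Zeta-\Exp[\Zeta]|$ --- is legitimate, but it applies equally to the paper's proof: the step $\Exp[\max_{\alpha}|\zeta_{\alpha}-\Exp[\zeta_{\alpha}]|^{p}]\leq B^{p}$ in \eqref{eq:four} also silently upgrades the one-sided hypothesis of Theorem~\ref{thm:Janson} to a two-sided one. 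Both proofs are therefore valid only under that strengthened hypothesis, which, as you note, holds in the single place the corollary is invoked (Assumption~\textbf{A\ref{assum:A2}} supplies the two-sided bound $b_{2}$ in the proof of Theorem~\ref{thm:Bernstein}). What the paper's truncation argument could buy in principle --- a Rosenthal inequality for summands satisfying only a one-sided bound together with moment control of the max --- is not actually exploited here, so your more elementary argument is a clean alternative.
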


\begin{proof}[Proof of Corollary~\ref{cor:Janson}]  Fix arbitrarily $p\geq 2$.
  It               is               well              known               that
  $\Exp[U^{p}]  = \int_{0}^{\infty}  p s^{p-1}  \Prob[U  \geq s]  ds$ for  any
  nonnegative random  variable $U$.  Let  $r > 0$ be  a constant that  we will
  carefully choose later on. Set arbitrarily  $s \geq 0$, define $m:=s/r$, and
  introduce
  \begin{equation*}
    \tZeta_{m}  :=  |\calA|^{-1}\sum_{\alpha \in  \calA}  (\zeta_{\alpha}  -
    \Exp[\zeta_{\alpha}]) \1\{|\zeta_{\alpha} - \Exp[\zeta_{\alpha}]| < m\}.
  \end{equation*}
  It holds that
  \begin{align*}
    \Prob(|\Zeta - \Exp[\Zeta]| \geq s)
    &  \leq \Prob[\Zeta -  \Exp[\Zeta] \neq
      \tZeta_{m}] + \Prob[|\Zeta - \Exp[\Zeta]| \geq s, \Zeta - \Exp[\Zeta] = \tZeta_{m}] \\
    & \leq
      \Prob[r\max_{\alpha  \in  \calA} |\zeta_{\alpha}  -  \Exp[\zeta_{\alpha}]|
      \geq s] + \Prob[|\Zeta - \Exp[\Zeta]| \geq s, \Zeta - \Exp[\Zeta] = \tZeta_{m}]\\
    & \leq
      \Prob[r\max_{\alpha  \in  \calA} |\zeta_{\alpha}  -  \Exp[\zeta_{\alpha}]|
      \geq s] + \Prob[|\tZeta_{m} - \Exp[\tZeta_{m}]| \geq s - \Exp[\tZeta_{m}]]
  \end{align*}
  hence
  \begin{equation}
    \label{eq:one}
    \Exp[|\Zeta  - \Exp[\Zeta]|^{p}]  \leq r^{p}  \Exp[\max_{\alpha \in
      \calA}   |\zeta_{\alpha}  -   \Exp[\zeta_{\alpha}]|^{p}]  +   \int_{0}^{\infty}
    ps^{p-1} \Prob[|\tZeta_{m} - \Exp(\tZeta_{m})] \geq s - \Exp[\tZeta_{m}]]ds.
  \end{equation}
  We now note that
  \begin{align*}
    |\Exp[\tZeta_{m}]|
    & = |\Exp[\tZeta_{m} - (\Zeta - \Exp[\Zeta])]| \\
    & = |\calA|^{-1}
      \left|\Exp\left[\sum_{\alpha     \in      \calA}     (\zeta_{\alpha}     -
      \Exp[\zeta_{\alpha}]) \1\{|\zeta_{\alpha} - \Exp[\zeta_{\alpha}]| \geq
      m\} \right] \right|\\
    &   \leq  (m|\calA|)^{-1}  \sum_{\alpha   \in  \calA}
      \Var[\zeta_{\alpha}] = V/m.
  \end{align*}
  Therefore if  $s \geq s_{0}  := \sqrt{2rV}$, then  $s/2 \geq V/(s/r)  = V/m$
  hence $s - |\Exp[\tZeta_{m}]| \geq  s/2$.  In light of \eqref{eq:Janson} and
  \eqref{eq:one}, the rightmost term in \eqref{eq:one}, say $I_{p}$, satisfies
  \begin{align}
    \notag I_{p}
    &\leq   \int_{0}^{s_{0}}    ps^{p-1}ds   +
      \int_{s_{0}}^{\infty} ps^{p-1} \Prob[|\tZeta_{m} - \Exp[\tZeta_{m}]| \geq s/2]ds\\
    \label{eq:two}
    &\leq  s_{0}^{p}   +  2   \int_{s_{0}}^{\infty}  ps^{p-1}
      \exp\left(-\frac{|\calA|\tilde{V}}{4m^{2}                                \deg(\calG)}
      h\left(\frac{8ms/2}{5\tilde{V}}\right)\right) ds,
  \end{align}
  where
  \begin{align*}
    \tilde{V}
    &:= |\calA|^{-1}\sum_{\alpha  \in \calA} \Var[(\zeta_{\alpha} -
      \Exp[\zeta_{\alpha}]) \1\{|\zeta_{\alpha} -  \Exp[\zeta_{\alpha}]| \leq m\}]\\
    &     \leq |\calA|^{-1}    \sum_{\alpha      \in     \calA}     \Exp[(\zeta_{\alpha}     -
      \Exp[\zeta_{\alpha}])^{2}  \1\{|\zeta_{\alpha}  -  \Exp[\zeta_{\alpha}]|  \leq
      m\}] \\
    &   \leq  |\calA|^{-1}  \sum_{\alpha   \in   \calA}   \Exp[(\zeta_{\alpha}   -
      \Exp[\zeta_{\alpha}])^{2}] = V.
  \end{align*}
  Because  $h(u)  \geq   \frac{u}{2}  \log(1  +  u)$  for  all   $u  \geq  0$,
  \eqref{eq:two} yields
  \begin{align}
    \notag I_{p}
    &\leq   s_{0}^{p}  +  2  \int_{s_{0}}^{\infty}
      ps^{p-1}      \exp\left(-\frac{|\calA|s}{10m      \deg(\calG)}     \log\left(1      +
      \frac{4ms}{5\tilde{V}}\right)\right) ds\\
    \label{eq:three}
    &=  s_{0}^{p}   +   2  \int_{s_{0}}^{\infty}   ps^{p-1}
      \exp\left(-\frac{|\calA|r}{10           \deg(\calG)}          \log\left(1           +
      \frac{4s^{2}}{5r\tilde{V}}\right)\right) ds.
  \end{align}
  If              $u:=             s/(5r\tilde{V}/4)^{1/2}$,              then
  $s^{p-1} \leq (5r\tilde{V}/4)^{(p-1)/2} (1 + u^{2})^{(p-1)/2}$.  A change of
  variable and the bound $\tilde{V} \leq V$ thus imply that the rightmost term
  in \eqref{eq:three} is smaller than
  \begin{equation*}
    2p \left(\frac{5rV}{4}\right)^{p/2} \int_{0}^{\infty} (1 +
    u^{2})^{(p-1)/2 - r|\calA|/(10\deg(\calG))} du.
  \end{equation*}
  We now choose $r :=  5(p+1)\deg(\calG)/|\calA|$ to guarantee the convergence
  of  the above  integral, to  $\pi/2$, and  conclude that  \eqref{eq:one} and
  \eqref{eq:three} imply
  \begin{align}
    \notag
    \Exp[|\Zeta - \Exp[\Zeta]|^{p}]
    &\leq  r^{p} \Exp[\max_{\alpha
      \in    \calA}    |\zeta_{\alpha}    -    \Exp[\zeta_{\alpha}]|^{p}]    +
      \pi (rV)^{p/2} (2^{p/2} + p (5/4)^{p/2})\\
    \label{eq:four}
    &\leq (rB)^{p} + \pi (p+1) (2rV)^{p/2}.
  \end{align}
  Finally, since $(p+1)/p  \leq 3/2$ and $p^{2/p} \leq  e^{2/e} \approx 2.61$,
  we can  simplify \eqref{eq:four} to \eqref{eq:moments},  thus completing the
  proof of Corollary~\ref{cor:Janson}.
\end{proof}

Fix arbitrarily  $j \in \llbracket J\rrbracket$,  $t \geq 1$,  $V > 0$,  and $p\geq 2$.   To save
space introduce, for each $\tau \in \llbracket t\rrbracket$,
\begin{equation*}
  \Zeta_{j,\tau} := \Delta^{\circ}\bar{\ell}(\theta_{j,\tau-1})(\bar{O}_{\tau},
  \bar{Z}_{\tau})                                                        -
  \Exp\left[\Delta^{\circ}\bar{\ell}(\theta_{j,\tau-1})(\bar{O}_{\tau},
    \bar{Z}_{\tau})                \middle|                \bar{Z}_{\tau},
    F_{\tau-1}\right].
\end{equation*}
In     view     of     \textbf{A\ref{assum:A0}},     \textbf{A\ref{assum:A1}},
\textbf{A\ref{assum:A2}}             and             \textbf{A\ref{assum:A4}},
Corollary~\ref{cor:Janson} applies and guarantees  that almost surely, for all
$\tau \in \llbracket t\rrbracket$,
\begin{align}
  \notag
  \Exp
  &\left[\left|\Zeta_{j,\tau}\right|^{p}  \middle|  \bar{Z}_{\tau},  F_{\tau-1}\right]
    \1\{\var_{j,\tau} \leq V\}\\
  \notag
  &\leq       \frac{3\pi}{2}
    \left[\left(\frac{15b_{2}\deg(\calG)}{2|\calA|}\right)^{p}     p^{p}     +
    \left(\frac{32V\deg(\calG)}{|\calA|}\right)^{p/2} 
    p^{p/2}\right] \1\{\var_{j,\tau} \leq V\}\\
  \label{eq:pth:moment}
  &\leq       \frac{3\pi}{2}
    \left[\left(\frac{15b_{2}\deg(\calG)}{2|\calA|}\right)^{p}     p^{p}     +
    \left(\frac{32V\deg(\calG)}{|\calA|}\right)^{p/2}     p^{p/2}\right].
\end{align}

It is now easy to show  that $\widetilde{\var}_{t,j} \leq v_{2}$ almost surely
(see    \eqref{eq:def:v2}    for    the   definition    of    $v_{2}$).     By
\textbf{A\ref{assum:A4}},  it  holds  that  $\var_{j,\tau}\leq  v_{1}$  almost
surely for each $\tau\in \llbracket t\rrbracket$, hence
\begin{equation*}
  \widetilde{\var}_{j,t} 
  =            \frac{1}{t}\sum_{\tau=1}^{t}
  \Exp\left[(\Zeta_{j,\tau})^{2}           \middle|          \bar{Z}_{\tau},
    F_{\tau-1}\right] = \frac{1}{t}\sum_{\tau=1}^{t}
  \Exp\left[(\Zeta_{j,\tau})^{2} \middle| \bar{Z}_{\tau},
    F_{\tau-1}\right] \1\{\var_{j,\tau}\leq v_{1}\} \leq v_{2}
\end{equation*}
because of \eqref{eq:pth:moment} with $p=2$.

We  now   turn  to  the   proof  of  \eqref{eq:thm:Bernstein}.   In   view  of
\eqref{eq:pth:moment}, by convexity of $u \mapsto |u|^{p}$, it holds that
\begin{align}
  \notag
  \Exp
  \left[\left|\widehat{H}_{j,t}       -       \widetilde{H}_{j,t}\right|^{p}
  \1\{\widetilde{\calF}_{V}\}\right] 
  &\leq                      \frac{1}{t}                      \sum_{\tau=1}^{t}
    \Exp\left[\left|\Zeta_{j,\tau}\right|^{p}\1\{\widetilde{\calF}_{V}\}\right]\\
  \notag
  &\leq                      \frac{1}{t}                      \sum_{\tau=1}^{t}
    \Exp\left[\left|\Zeta_{j,\tau}\right|^{p}\1\{\var_{j,\tau}
    \leq V\}\right]\\
  \notag
  &=                      \frac{1}{t}                      \sum_{\tau=1}^{t}
    \Exp\left[\Exp\left[\left|\Zeta_{j,\tau}\right|^{p}|\middle|
    \bar{Z}_{\tau}, F_{\tau-1}\right]\1\{\var_{j,\tau} 
    \leq V\}\right]\\
  \label{eq:convexity:argument}
  &\leq                                                         \frac{3\pi}{2}
    \left[\left(\frac{15b_{2}\deg(\calG)}{2|\calA|}\right)^{p} p^{p} + 
    \left(\frac{32V\deg(\calG)}{|\calA|}\right)^{p/2} p^{p/2}\right].
\end{align}
Therefore Markov's inequality implies that, for all $x > 0$,
\begin{align}
  \notag
  \Prob\left[\left|\widehat{H}_{j,t}    -   \widetilde{H}_{j,t}\right|    \geq
  x, \widetilde{\calF}_{V}\right]
  &\leq
    \Exp\left[x^{-p}\left| \widehat{H}_{j,t} -
    \widetilde{H}_{j,t}\right|^{p}\1\{\widetilde{\calF}_{V}\}\right]\\
  \label{eq:Markov:v1}
  &\leq \frac{3\pi}{2}\left(\frac{15b_{2}\deg(\calG)p/2
    + \sqrt{32|\calA|V\deg(\calG)p}}{x|\calA|}\right)^{p}.
\end{align}
By the technical Lemma~\ref{lem:tech}, there exists $p_{x} > 0$ such that
\begin{align*}
  x|\calA|
  &=    15eb_{2}\deg(\calG)p_{x}/2   +    \sqrt{32e^{2}|\calA|V\deg(\calG)p_{x}},
    \quad\text{and}\\
  p_{x} \geq q_{x}
  &:=  (x|\calA|)^{2} \left(32e^{2}|\calA|V\deg(\calG)  +
    15eb_{2}\deg(\calG)x|\calA|\right)^{-1}\\
  &=  x^{2}|\calA| \left(32e^{2}V\deg(\calG)  +
    15eb_{2}\deg(\calG)x\right)^{-1}.
\end{align*}
If   $q_{x}  \geq   2$,  then   $p_{x}$  is   a  valid   choice  for   $p$  in
\eqref{eq:Markov:v1}. This choice yields the inequality
\begin{equation*}
  \label{eq:Markov:one}
  \Prob\left[\left|\widehat{H}_{j,t} - \widetilde{H}_{j,t}\right| \geq
    x,  \widetilde{\calF}_{V}\right]  \leq  \frac{3\pi}{2}  \exp(-p_{x})  \leq
  \frac{3\pi}{2} \exp(-q_{x})   \leq \exp\left(2-q_{x}\right). 
\end{equation*}
Otherwise,
$\Prob[|\widehat{H}_{j,t}       -      \widetilde{H}_{j,t}|       \geq      x,
\widetilde{\calF}_{V}]  \leq \exp(2-q_{x})$  holds trivially.   This completes
the proof of Theorem~\ref{thm:Bernstein}.~\hfill$\square$

\begin{lemma}
  \label{lem:tech}
  For   any  $a,b,c>0$,   there  exists   $p>0$  such   that  $c=b\sqrt{p}   +
  ap$. Moreover, $c^{2}\leq (b^{2} + 2ac)p$. 
\end{lemma}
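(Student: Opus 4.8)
The plan is to reduce the first assertion to solving a quadratic equation. First I would substitute $q := \sqrt{p}$, which turns the target identity $c = b\sqrt{p} + ap$ into $aq^{2} + bq - c = 0$, a quadratic in $q$ with strictly positive leading coefficient $a$ and strictly negative constant term $-c$. Since its discriminant $b^{2} + 4ac$ is positive, this quadratic has exactly one positive root, namely $q = \bigl(-b + \sqrt{b^{2}+4ac}\bigr)/(2a) > 0$. Setting $p := q^{2} > 0$ then yields a solution of $c = b\sqrt{p} + ap$, which establishes existence.

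For the moment inequality I would just expand $c^{2}$ using the identity $c = b\sqrt{p} + ap$ that has just been obtained. Writing $c^{2} = \bigl(b\sqrt{p} + ap\bigr)^{2} = b^{2}p + 2abp\sqrt{p} + a^{2}p^{2}$ and observing that $2abp\sqrt{p} + 2a^{2}p^{2} = 2ap\bigl(b\sqrt{p} + ap\bigr) = 2acp$, I would rewrite this as $c^{2} = b^{2}p + 2acp - a^{2}p^{2}$. The desired bound $c^{2} \leq (b^{2} + 2ac)p$ then follows immediately by dropping the non-positive term $-a^{2}p^{2}$.

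There is no genuine obstacle here: the only point requiring a moment's care is checking that the relevant root of the quadratic is the positive one (immediate from the sign of the constant term $-c$), and then tracking the elementary algebra in the expansion of $c^{2}$. The whole argument is a two-line computation once the substitution $q = \sqrt{p}$ is made.
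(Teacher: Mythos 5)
Your proof is correct and takes essentially the same route as the paper: both identify $p$ via the positive root of the quadratic $aX^{2}+bX-c=0$ (after the substitution $X=\sqrt{p}$), and both prove the bound by expanding $c^{2}$, observing that the cross and quadratic terms are accounted for by $2acp$, and discarding a nonnegative remainder. The only difference is cosmetic — the paper divides through by $p$ and compares $c^{2}/p$ to $b^{2}+2ac$, while you keep things multiplicative and drop $-a^{2}p^{2}$ — but the algebra is identical.
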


\begin{proof}[Proof of Lemma~\ref{lem:tech}]
  The quadratic  equation $c=bX +  aX^{2}$ has  a positive solution,  so there
  does   exist    $p>0$   such    that   $c=b\sqrt{p}   +    ap$.    Moreover,
  $c^{2}/p  =   b^{2}  +  2ab\sqrt{p}  +   a^{2}  p$  on  the   one  hand  and
  $2ac = 2ab\sqrt{p}  + 2a^{2}p \geq 2ab\sqrt{p} + a^{2}p$  on the other hand,
  implying that $c^{2}/p \leq b^{2} + 2ac$. This completes the proof.  
\end{proof}

\end{document}